\definecolor{dblue}{rgb}{0,0,0.70}
\newtheorem{theorem}
{Theorem}[section]
\newtheorem*{theorem*}{Theorem}
\newaliascnt{lemma}{theorem}
\newtheorem*{lemma*}{Lemma}
\newtheorem{claim}[theorem]{Claim}
\newaliascnt{fact}{theorem}
\newaliascnt{proposition}{theorem}
\newtheorem{proposition}[proposition]{Proposition}
\newtheorem*{proposition*}{Proposition}
\newaliascnt{corollary}{theorem}
\newtheorem{corollary}[corollary]{Corollary}
\theoremstyle{remark}
\newaliascnt{remark}{theorem}
\newaliascnt{question}{theorem}
\newtheorem{question}[question]{Question}
\newaliascnt{conjecture}{theorem}
\newtheorem*{question*}{Question}
\newaliascnt{definition}{theorem}
\newtheorem{definition}[definition]{Definition}
\newaliascnt{example}{theorem}
\newtheorem{example}[example]{Example}
\renewcommand{\restriction}{\mathbin\upharpoonright}
\newcommand{\axiom}[1]{\mathsf{#1}}
\newcommand{\ZFC}{\axiom{ZFC}}
\newcommand{\AC}{\axiom{AC}}
\newcommand{\DC}{\axiom{DC}}
\newcommand{\ZF}{\axiom{ZF}}
\newcommand{\ZFA}{\axiom{ZFA}}
\newcommand{\COrd}{\mathrm{COrd}}
\newcommand{\GCH}{\axiom{GCH}}
\newcommand{\HS}{\axiom{HS}}
\DeclareMathOperator{\cf}{cf}
\DeclareMathOperator{\dom}{dom}
\DeclareMathOperator{\supp}{supp}
\DeclareMathOperator{\sym}{sym}
\DeclareMathOperator{\fix}{fix}
\DeclareMathOperator{\id}{id}
\DeclareMathOperator{\Add}{Add}
\DeclareMathOperator{\tcl}{tcl}
\newcommand{\forces}{\mathrel{\Vdash}}
\newcommand{\PP}{\mathbb P}
\newcommand{\QQ}{\mathbb Q}
\newcommand{\cD}{\mathcal D}
\newcommand{\cM}{\mathcal M}
\newcommand{\cS}{\mathcal S}
\newcommand{\sF}{\mathscr F}
\newcommand{\sG}{\mathscr G}
\newcommand{\1}{\mathds 1}
\newcommand{\tup}[1]{\langle#1\rangle}
\author{Asaf Karagila}
\author{Jonathan Schilhan}
\email{karagila@math.huji.ac.il}
\urladdr{http://karagila.org}
\email{j.schilhan@leeds.ac.uk}
\urladdr{http://www.logic.univie.ac.at/~schilhanj/}
\address{School of Mathematics,
    University of Leeds.
    Leeds, LS2~9JT, UK}
\thanks{The authors were supported by a UKRI Future Leaders Fellowship [MR/T021705/1].}
\date{December 21, 2022}
\subjclass[2020]{Primary 03E25; Secondary 03E35, 03E40}
\keywords{forcing, axiom of choice, distributive forcing, sequential forcing}
\title[Sequential and distributive forcings without choice]{Sequential and distributive forcings\\ without choice}
\begin{document}
\begin{abstract}
  In the Zermelo--Fraenkel set theory with the Axiom of Choice a forcing notion is ``$\kappa$-distributive'' if and only if it is ``$\kappa$-sequential''. We show that without the Axiom of Choice this equivalence fails, even if we include a weak form of the Axiom of Choice, the Principle of Dependent Choice for $\kappa$. Still, the equivalence may still hold along with very strong failures of the Axiom of Choice, assuming the consistency of large cardinal axioms. We also prove that while a $\kappa$-distributive forcing notion may violate Dependent Choice, it must preserve the Axiom of Choice for families of size $\kappa$. On the other hand, a $\kappa$-sequential can violate the Axiom of Choice for countable families. We also provide a condition of ``quasiproperness'' which is sufficient for the preservation of Dependent Choice, and is also necessary if the forcing notion is sequential.
\end{abstract}
\maketitle              
\section{Introduction}
The method of forcing was developed by Paul Cohen in 1963 to prove that the Continuum Hypothesis cannot be proved from the Zermelo--Fraenkel set theory with the Axiom of Choice ($\ZFC$). The technique works by picking a partial order approximating a ``generic set'' that can be added to a ``ground model'' of set theory while preserving the axioms of $\ZFC$. We understand the general theory of forcing fairly well when working in $\ZFC$. For example, if we chose a partial order which is countably distributive, then the generic extension of the universe will not have any new countable sequences of ground model elements. This property implies, among other things, that no new real numbers are added, and that $\omega_1$, the least uncountable cardinal, is the same between the ground model and its generic extension. On the other hand, we know that distributivity assumptions are not enough to prove that stationary subsets of $\omega_1$ remain stationary.\footnote{We can think of stationary sets as similar to sets which are not null in comparison to the Lebesgue measure on the unit interval.}

While the basic machinery of forcing does not rely on the Axiom of Choice, its general theory makes heavy use of it. This means that working over general models of Zermelo--Fraenkel ($\ZF$), where the Axiom of Choice is not necessarily assumed, is significantly harder: our intuition was honed in $\ZFC$ for many decades, and we still do not have a complete picture of what could go wrong, or how do our standard definitions behave in general models of $\ZF$. With the recent advents of very large cardinal axioms,\footnote{These are axioms that go beyond $\ZFC$, the most famous one is perhaps ``there is an inaccessible cardinal'', or equivalently ``there is a Grothendieck universe''.} e.g.\ Reinhardt and Berkeley cardinals whose existence refutes the Axiom of Choice, it is very important to better understand the theory of forcing in $\ZF$.

In this paper we separate two properties which are equivalent in $\ZFC$, namely, distributivity and adding new sequences of ground model objects, which we term ``sequential''. Our main result is that this equivalence is not provable from $\ZF$, or even $\ZF$ augmented by the Principle of Dependent Choice ($\DC$) and its generalised versions. Moreover, we show that forcing with a distributive partial order must preserve the Axiom of Choice for countable families of sets ($\AC_\omega$), but can violate $\DC$, whereas a sequential partial order may even violate $\AC_\omega$ itself. We also provide a necessary and sufficient condition for a sequential partial order to preserve $\DC$, termed here ``quasiproperness''.

Finally, we provide a partial answer to the question of whether or not the equivalence between the two properties is itself equivalent to the Axiom of Choice. We prove that in the Gitik model, where all the limit ordinals have countable cofinality, the equivalence between the two properties holds, while the Axiom of Choice fails quite badly. The one drawback is that the Gitik model requires assuming the consistency of suitable large cardinal axioms, which leaves the question of whether or not the equivalence can hold in the absence of the Axiom of Choice without these additional assumptions wide open.

\subsection{In this paper}
We begin by covering the basics of symmetric extensions, our main technical tool for constructions models of $\ZF$. In \autoref{sec:dist} we study the basic properties of distributive and sequential forcings. \autoref{sec:minor} is dedicated for two minor results in the study of preservation of choice principles under generic extensions, we define a property akin to properness and show that it is equivalent to the preservation of $\DC$, at least for sequential forcings. \autoref{sec:major} is dedicated for our main theorem. Finally, \autoref{sec:questions} concludes the paper with several open questions that arise from this work.
\subsection*{Acknowledgements}
The authors would like to thank Jonathan Kirby and Mark Kamsma for their comments regarding the introduction of this paper. We would also like to thank the anonymous referee for their helpful remarks.
\section{Preliminaries}
Throughout this paper we work in $\ZF$, unless specified otherwise. Our treatment of forcing will be standard. If $\PP$ is a notion of forcing, then $\PP$ is a preordered set with a maximum element denoted by $\1_\PP$, or with the subscript omitted when clear from context. We write $q\leq p$ to mean that $q$ is a \textit{stronger} condition than $p$, or that it \textit{extends} $p$. Two conditions are compatible if they have a common extension. We will also follow Goldstern's alphabet convention so $p$ is never a stronger condition than $q$, etc.

When given a collection of $\PP$-names, $\{\dot x_i\mid i\in I\}$, we will denote by $\{\dot x_i\mid i\in I\}^\bullet$ the canonical name this class generates: $\{\tup{\1,\dot x_i}\mid i\in I\}$. This notation extends naturally to ordered pairs and functions whose domain is in the ground model. We will also say that $\dot y$ \textit{appears} in $\dot x$ if there is some $p\in\PP$ such that $\tup{p,\dot y}\in\dot x$.

Given a set $X$, we use $|X|$ to denote its cardinal number. If $X$ can be well-ordered, then $|X|$ is simply the least ordinal equipotent with $X$. Otherwise, we use the Scott cardinal of $X$ which is the set $\{Y\in V_\alpha\mid\exists f\colon X\to Y\text{ a bijection}\}$ with $\alpha$ taken as the least ordinal for which the set is non-empty. Greek letters, when used as cardinals, will always refer to well-ordered cardinals. We will denote by $\COrd$ the class of well-orderable cardinals, that is the finite ordinals and the $\aleph$ numbers.

We write $|X|\leq|Y|$ to mean that there is an injection from $X$ into $Y$, and we write $|X|<|Y|$ to mean that there is an injection, but there is no injection from $Y$ into $X$. Note that unlike in the case of $\ZFC$, writing $|X|\nleq|Y|$ does not imply that $|Y|<|X|$.

We write $|X|\leq^*|Y|$ to mean that there is a surjection from a subset of $Y$ onto $X$.\footnote{We have no need for the case $<^*$ here, but to dispel any ambiguity, $|X|<^*|Y|$ means that there is a surjection from $Y$ onto $X$, but no surjection from $X$ onto $Y$, which is stronger than saying $|X|\leq^*|Y|$ and $|X|\neq|Y|$.} This relation is transitive, not necessarily antisymmetric (unlike $\leq$).

The axiom $\AC_X$ states that given any family of non-empty set indexed by $X$ admits a choice function, we omit $X$ to mean $\forall X\,\AC_X$. For an infinite cardinal $\kappa$, the axiom $\DC_\kappa$ states that every $\kappa$-closed tree\footnote{Recall that a tree is $\kappa$-closed if for all $\alpha<\kappa$, every chain of order type $\alpha$ has an upper bound.} has a maximal element or a chain of order type $\kappa$. We write $\DC_{<\kappa}$ to mean $(\forall\lambda<\kappa)\DC_\lambda$. In the case of $\DC_\omega$ we simply write $\DC$.

\subsection{Symmetric extensions}
Forcing is an extremely versatile technique when it comes to independence proofs. It has one drawback: a generic extension of a model of $\ZFC$ is a model of $\ZFC$.\footnote{You could say that this is not a bug, but a feature, and you would not be wrong. But it is a problem when we want to prove independence results related to the axiom of choice.} But we can extend the technique of forcing. By imitating the Fraenkel--Mostowski--Specker technique for permutation models\footnote{In the context of $\ZFA$, that is $\ZF$ with atoms.} we can identify a class of names which defines an intermediate model, between the ground model and its generic extension, where the axiom of choice may fail.

Let $\PP$ be a fixed forcing notion. If $\pi$ is an automorphism of $\PP$, then $\pi$ extends to $\PP$-names by recursion: \[\pi\dot x=\{\tup{\pi p,\pi\dot y}\mid\tup{p,\dot y}\in\dot x\}.\]
Seeing how the forcing relation is defined from the order, the following lemma is not surprising. For a proof of this lemma, see Lemma~14.37 in \cite{Jech:ST2003}.
\begin{lemma*}[The Symmetry Lemma]
Let $\PP$ be a forcing, $\pi$ an automorphism of $\PP$, $p\in\PP$, and $\dot x$ some $\PP$-name. Then \[p\forces\varphi(\dot x)\iff\pi p\forces\varphi(\pi\dot x).\qed\]
\end{lemma*}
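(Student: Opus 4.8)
The plan is to argue by induction on the syntactic complexity of $\varphi$, exploiting exactly the observation the paper makes before the statement: every clause in the recursive definition of $\forces$ refers to nothing about $\PP$ beyond its order relation, and refers to names only through the ``appears in'' relation, and $\pi$ respects both. Before beginning the induction I would record the invariances of $\pi$ that drive everything. As an automorphism of $(\PP,\leq)$ it is an order-preserving bijection, so $q\leq p\iff\pi q\leq\pi p$; hence it preserves compatibility and incompatibility, it restricts to a bijection of the extensions of $p$ onto the extensions of $\pi p$, and it sends sets dense below $p$ to sets dense below $\pi p$. On the name side, the defining clause $\pi\dot x=\set{\tup{\pi q,\pi\dot y}\mid\tup{q,\dot y}\in\dot x}$ gives $\tup{r,\dot z}\in\dot x\iff\tup{\pi r,\pi\dot z}\in\pi\dot x$, so $\dot z$ appears in $\dot x$ iff $\pi\dot z$ appears in $\pi\dot x$; in particular $\pi$ is a bijection on the class of names and $\rank(\pi\dot x)=\rank(\dot x)$ for the recursive rank generated by ``appears in,'' by an immediate induction.

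First I would treat the atomic formulas $\dot x\in\dot y$ and $\dot x=\dot y$ by a simultaneous induction on $\rank(\dot x)$ and $\rank(\dot y)$. Each clause in the recursive definition of forcing an atomic statement is built from order assertions of the form ``$q\leq p$'' and ``$q\leq s$,'' membership assertions ``$\tup{\dot z,s}\in\dot w$,'' assertions that a certain set is dense below $p$, and lower-rank forcing assertions ``$q\forces\dot z=\dot w$'' with $\rank(\dot z),\rank(\dot w)$ strictly smaller, all quantified over conditions $q\leq p$. Replacing every condition and every name by its $\pi$-image, the order and membership assertions are preserved by the invariances above, density below $p$ becomes density below $\pi p$, the quantifiers over the cone below $p$ become quantifiers over the cone below $\pi p$, and the lower-rank forcing assertions transform correctly by the induction hypothesis. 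This yields $p\forces\dot x\mathrel{R}\dot y\iff\pi p\forces\pi\dot x\mathrel{R}\pi\dot y$ for $R\in\set{\in,=}$.

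The induction on formula complexity is then routine: for a conjunction one applies the hypothesis to each conjunct; for an existential quantifier one uses that $\pi$ is a bijection on names, so a witness $\dot w$ corresponds to the witness $\pi\dot w$; and for a negation one uses that $q\mapsto\pi q$ is a bijection from the extensions of $p$ onto the extensions of $\pi p$, so ``no $q\leq p$ forces $\psi(\dot x)$'' transfers to ``no $q'\leq\pi p$ forces $\psi(\pi\dot x)$.'' The one step demanding real care is the atomic base, where the simultaneous two-name recursion must be kept aligned with the transformation of each order, membership, density, and quantifier clause under $\pi$; once that bookkeeping is correct, the connective and quantifier cases contribute nothing beyond the bijectivity of $\pi$ on conditions and on names. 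Alternatively, granting the Truth Lemma one can give a purely semantic proof: verify by induction on $\rank(\dot x)$ that $(\pi\dot x)_G=\dot x_{\pi^{-1}[G]}$ for every generic $G$, observe that $G\mapsto\pi^{-1}[G]$ is a bijection of the generic filters sending those containing $\pi p$ exactly to those containing $p$, and read both implications off the Truth Lemma.
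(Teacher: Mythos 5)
Your proposal is correct and follows essentially the argument the paper points to (Lemma~14.37 in Jech): induction on the complexity of $\varphi$, with the atomic case handled by a simultaneous rank induction on names, all driven by the fact that $\pi$ preserves the order, compatibility, density, and the ``appears in'' relation. The semantic alternative via $(\pi\dot x)_G=\dot x_{\pi^{-1}[G]}$ is also a standard and valid shortcut.
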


  Let $\sG$ be a group, we say that $\sF$ is a \textit{filter of subgroups} if it is a non-empty collection of subgroups of $\sG$ which is closed under supergroups and finite intersections. We say that $\sF$ is \textit{normal} if whenever $H\in\sG$ and $\pi\in\sG$, then $\pi H\pi^{-1}\in\sF$ as well.

We say that $\tup{\PP,\sG,\sF}$ is a \textit{symmetric system} if $\PP$ is a forcing notion, $\sG$ is a group of automorphisms of $\PP$, and $\sF$ is a normal filter of subgroups of on $\sG$. Given such symmetric system, we say that a $\PP$-name, $\dot x$, is \textit{$\sF$-symmetric} if $\sym_\sG(\dot x)=\{\pi\in\sG\mid\pi\dot x=\dot x\}\in\sF$. We say that $\dot x$ is hereditarily $\sF$-symmetric, if this notion holds for every $\PP$-name hereditarily appearing in $\dot x$. We denote by $\HS_\sF$ the class of hereditarily $\sF$-symmetric names.
\begin{theorem*}
Let $\tup{\PP,\sG,\sF}$ be a symmetric system, $G\subseteq\PP$ a $V$-generic filter, and let $M$ denote the class $\HS_\sF^G=\{\dot x^G\mid\dot x\in\HS_\sF\}$. Then $M$ is a transitive model of $\ZF$ satisfying $V\subseteq M\subseteq V[G]$.
\end{theorem*}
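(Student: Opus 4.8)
The plan is to verify the three assertions---transitivity of $M$, the inclusions $V\subseteq M\subseteq V[G]$, and that $M\models\ZF$---by working with the class $\HS_\sF$ of hereditarily $\sF$-symmetric names and its evaluation under the generic $G$. The key tool throughout is the Symmetry Lemma, which lets us transport forcing statements along automorphisms in $\sG$, together with the normality and filter properties of $\sF$.

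\emph{Transitivity and the inclusions.} First I would observe that $\HS_\sF$ is closed under taking names that appear hereditarily: if $\dot x\in\HS_\sF$ and $\dot y$ appears in $\dot x$, then $\dot y\in\HS_\sF$ by definition, so every element of $\dot x^G$ is of the form $\dot y^G$ for some $\dot y\in\HS_\sF$, giving transitivity of $M$ immediately. For $V\subseteq M$, I would check that the canonical name $\check a$ of any $a\in V$ is hereditarily symmetric: every automorphism $\pi\in\sG$ fixes $\check a$ pointwise, so $\sym_\sG(\check a)=\sG\in\sF$ (as $\sF$ is a nonempty filter, hence contains $\sG$), and this passes to all names appearing in $\check a$. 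Thus $a=\check a^G\in M$. The inclusion $M\subseteq V[G]$ is trivial since every element of $M$ is the $G$-evaluation of a $\PP$-name.

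\emph{The model satisfies $\ZF$.} This is the substantive part. I would verify the axioms by exhibiting, for the relevant closure operations, symmetric names witnessing that $M$ is closed under them. Extensionality and Foundation follow from transitivity. For Pairing and Union, given $\dot x,\dot y\in\HS_\sF$ one forms the obvious names for $\{\dot x^G,\dot y^G\}$ and $\bigcup\dot x^G$; the point is that $\sym_\sG$ of the new name contains $\sym_\sG(\dot x)\cap\sym_\sG(\dot y)$ (respectively $\sym_\sG(\dot x)$), which lies in $\sF$ by closure under finite intersections and supergroups. For Separation and Replacement I would take a name $\dot x\in\HS_\sF$ and a formula, and build the name consisting of those pairs $\tup{p,\dot z}$ (with $\dot z$ appearing in $\dot x$, or in a suitable bound for Replacement) such that $p$ forces the relativized instance $\varphi^M$; the crucial computation is that this name is fixed by $\sym_\sG(\dot x)$, which requires the Symmetry Lemma to see that $\pi$ permutes the qualifying pairs among themselves whenever $\pi\dot x=\dot x$, using that $\pi$ preserves the relation ``$p$ forces $\varphi^M$'' because the class $M$ is itself invariant under $\sG$. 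Power Set is handled by collecting all hereditarily symmetric names for subsets of $\dot x^G$ bounded in rank; Infinity follows since $\omega\in V\subseteq M$.

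\emph{Main obstacle.} The delicate point is Separation (and its use in the other schemes): I must ensure that the defining formula $\varphi$ is \emph{relativized} to $M$, and that the resulting name is genuinely $\sF$-symmetric. The symmetry computation hinges on the fact that $M$ is a definable, automorphism-invariant class, so that $\pi(\varphi^M(\dot z))$ is equivalent to $\varphi^M(\pi\dot z)$; combined with the Symmetry Lemma this yields invariance of the constructed name under $\sym_\sG(\dot x)$. I would be careful to phrase this so that it does not secretly presuppose $M\models\ZF$, i.e. to argue the closure syntactically at the level of names rather than semantically inside $M$.
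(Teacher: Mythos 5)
Your sketch is correct and is essentially the standard argument: the paper itself gives no proof of this theorem, deferring to Lemma~15.51 of Jech's \emph{Set Theory}, and your outline (transitivity from hereditary symmetry, $\check{a}$ being fixed by all of $\sG$, and the verification of the $\ZF$ schemes via the Symmetry Lemma together with normality of $\sF$ to keep $\HS_\sF$ and hence the relativized forcing relation $\sG$-invariant) is precisely that proof.
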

We say that $M$ as in the theorem above, whose proof appears as Lemma~15.51 in \cite{Jech:ST2003}, is a \textit{symmetric extension} of $V$. The symmetric extensions of $V$ were studied recently by Usuba in \cite{Usuba:LS,Usuba:GeologySym}. It is tempting to think that every intermediate model of $\ZF$ is a symmetric extension, but this is not true, as was shown in \cite{Karagila:Bristol}.

Since we will only have a single symmetric extension of concern at each step, even if we will force over it, we will omit the subscripts from the notation from here on out.

Finally, we have a forcing relation for symmetric extensions, $\forces^\HS$ defined by relativising the $\forces$ relation to the class $\HS$. This relation has the same properties and behaviour as the standard $\forces$ relation. Moreover, if $\pi\in\sG$, then the Symmetry Lemma holds also for $\forces^\HS$.

We conclude this introduction with a general example.

\begin{example}\label{ex:cohen}
  Let $\kappa$ and $\lambda$ be regular cardinals such that $\lambda\geq\kappa$ and suppose that $\kappa^{<\kappa}=\kappa$. Let $\PP$ be the forcing $\Add(\kappa,\lambda)$, whose conditions are partial functions $p\colon\lambda\times\kappa\to 2$ such that $|p|<\kappa$, the projection of $p$ onto its $\lambda$ component is called the \textit{support} of $p$ and is denoted by $\supp p$. We let $\sG$ be the group of permutations of $\lambda$, and $\pi\in\sG$ acts on $\PP$ by letting \[\pi p(\pi\alpha,\beta)=p(\alpha,\beta).\] Finally, let the filter of subgroups be generated by $\{\fix(E)\mid E\in[\lambda]^{<\lambda}\}$, where $\fix(E)=\{\pi\in\sG\mid\pi\restriction E=\id\}$.

  We denote by $\dot a_\alpha$, for $\alpha<\lambda$, the name of the $\alpha$th generic subset: \[\{\tup{p,\check\beta}\mid p(\alpha,\beta)=1\}.\] We will denote by $\dot A$ the name $\{\dot a_\alpha\mid\alpha<\lambda\}^\bullet$. Let $G$ be a $V$-generic filter and let $M$ be the corresponding symmetric extension, we will omit the dots to indicate the interpretation of the names in $M$. We will show that the following hold in $M$:
  \begin{enumerate}
  \item Every well-orderable subset of $A$ has size $<\lambda$.
  \item $\DC_{<\lambda}+\lnot\AC$.
  \end{enumerate}

  First we observe that $\pi\dot a_\alpha=\dot a_{\pi\alpha}$, and since all the names appearing in $\dot a_\alpha$ are canonical ground model names, $\fix(\{\alpha\})$ witnesses that $\dot a_\alpha\in\HS$. Consequently, $\pi\dot A=\dot A$ for all $\pi\in\sG$. Therefore, each $a_\alpha$ and $A$ itself are all in $M$.

  Suppose that $\dot B\in\HS$ and $p\forces^\HS``\dot B\subseteq\dot A$ and can be well-ordered.'' Let $E\subseteq\lambda$ be such that $\fix(E)\subseteq\sym(\dot f)$, where $p\forces^\HS``\dot f\colon\dot B\to\check\eta$ is an injective function'', and we may also assume that $\supp p\subseteq E$. Note that $\fix(E)\subseteq\sym(\dot B)$ as well.

  Let $\alpha<\lambda$ be such that $\alpha\notin E$, let $q\leq p$ be a condition such that $q\forces^\HS\dot a_\alpha\in\dot B$ and without loss of generality we also assume that for some $\delta<\eta$, $q\forces^\HS\dot f(\dot a_\alpha)=\check\delta$.

  Since $\supp q$ is of size $<\kappa$, we can find $\beta\notin\supp q\cup E$ and consider $\pi$ to be the automorphism defined by the $2$-cycle $(\alpha\ \beta)$. By the choice of $\alpha$ we immediately have that $\pi\in\fix(E)$ and therefore $\pi p=p,\pi\dot B=\dot B,\pi\dot f=\dot f$. Applying these, along with the Symmetry Lemma, we get that $\pi q\forces^\HS\dot f(\dot a_\alpha)=\check\delta$. But $\pi q$ is compatible with $q$, as we only moved one coordinate to a previously-empty one. This means that $r=q\cup\pi q$ is a condition which forces both ``$\dot f$ is injective'' and $\dot f(\dot a_\alpha)=\dot f(\dot a_\beta)$. This is of course impossible. This means that there is no such $q$, and therefore if $\alpha\notin E$, $p\forces^\HS\dot a_\alpha\notin\dot B$. Since $E\in[\lambda]^{<\lambda}$, and since $\lambda$ was not collapsed in $V[G]$, the conclusion holds.

  This immediately shows that $\lnot\AC$ holds as well. To get $\DC_{<\lambda}$ we appeal to \cite{Karagila:DC}, where the folklore results about preservation of Dependent Choice principles are formalised. Specifically, if $\PP$ is $\lambda$-closed or has $\lambda$-c.c., which in this case follows from the assumption $\kappa^{<\kappa}=\kappa$, and $\sF$ is $\lambda$-complete, then $\DC_{<\lambda}$ holds in the symmetric extension.
\end{example}

In the case $\lambda=\kappa$ we refer to this model as the $\kappa$-Cohen model, and if $\kappa=\omega$ we omit it altogether. The Cohen model is one of the most important models of $\ZF+\lnot\AC$. It satisfies the Boolean Prime Ideal theorem, and has many interesting properties. For a complete exposition, see Chapter~5 in \cite{Jech:AC}.
\section{Distributive and sequential forcings}\label{sec:dist}
\begin{definition}
We say that a forcing notion $\PP$ is \textit{${\leq}|X|$-distributive} if whenever $\tup{D_x\mid x\in X}$ is a family of dense open sets, $\bigcap_{x\in X}D_x$ is a dense open set.\footnote{The intersection is always open, in the case of forcing, so we really only care about its density.} If $X$ can be well-ordered, we will use the standard notation of $\kappa$-distributive to mean ``for all $\lambda<\kappa$, ${\leq}\lambda$-distributive'', and we will use $\sigma$-distributive to mean $\aleph_1$-distributive.
\end{definition}
To make the definition smoother, we consider the intersection as bounded by $\PP$, namely $\bigcap_{x\in X}D_x=\{p\in\PP\mid\forall x\in X, p\in D_x\}$. This has the benefit that for $X=\varnothing$, $\bigcap_{x\in X}D_x=\PP$.
\begin{proposition}\label{prop:*-dist}
If $\PP$ is ${\leq}|X|$-distributive and $|Y|\leq^*|X|$, then $\PP$ is also ${\leq}|Y|$-distributive.
\end{proposition}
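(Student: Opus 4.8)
The plan is to pull back an arbitrary $Y$-indexed family of dense open sets along the surjection witnessing $|Y|\leq^*|X|$, obtaining an $X$-indexed family to which the hypothesis applies. By the definition of $\leq^*$, fix a subset $X'\subseteq X$ together with a surjection $g\colon X'\to Y$. Given any family $\tup{E_y\mid y\in Y}$ of dense open subsets of $\PP$, I would define a new family $\tup{D_x\mid x\in X}$ by setting $D_x=E_{g(x)}$ for $x\in X'$ and $D_x=\PP$ for $x\in X\setminus X'$.

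First I would check that each $D_x$ is dense open: for $x\in X'$ this is immediate, since $E_{g(x)}$ is one of the given dense open sets, and $\PP$ itself is trivially dense and (downward) open. Hence by $\leq|X|$-distributivity the set $\bigcap_{x\in X}D_x$ is dense open. Next I would identify this intersection: the coordinates indexed by $X\setminus X'$ each contribute $\PP$ and may be discarded, so $\bigcap_{x\in X}D_x=\bigcap_{x\in X'}E_{g(x)}$. Since $g$ is surjective, as $x$ ranges over $X'$ the index $g(x)$ ranges over all of $Y$, whence $\bigcap_{x\in X'}E_{g(x)}=\bigcap_{y\in Y}E_y$. Therefore $\bigcap_{y\in Y}E_y$ is dense open, which is exactly what $\leq|Y|$-distributivity demands.

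The one point deserving care --- and the reason the hypothesis is phrased with $\leq^*$ rather than with something requiring a selection --- is that the family $\tup{D_x\mid x\in X}$ is defined outright by composing the fixed surjection $g$ with the fixed family $\tup{E_y\mid y\in Y}$. No choices are made anywhere in the construction, so the argument goes through in $\ZF$. I do not anticipate a genuine obstacle: the statement is essentially a monotonicity principle for distributivity, and it is precisely the surjective (rather than injective) direction of the comparison that lets us cover every index $y\in Y$ while harmlessly filling the unused $X$-coordinates with the trivial dense open set $\PP$.
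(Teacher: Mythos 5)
Your proof is correct and follows essentially the same route as the paper: pull the $Y$-indexed family back along the surjection to get an $X$-indexed family, apply ${\leq}|X|$-distributivity, and observe that the two intersections coincide by surjectivity. The only cosmetic difference is that you pad the coordinates outside the domain of the partial surjection with $\PP$, whereas the paper works with a total surjection $X\to Y$ (disposing of $Y=\varnothing$ separately); both handle the ``subset of $X$'' issue without invoking choice.
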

\begin{proof}
Let $f\colon X\to Y$ be a surjective function (for $Y=\varnothing$ the conclusion is vacuously true), if $\tup{D_y\mid y\in Y}$ is a family of dense open sets, let $E_x=D_{f(x)}$, then $\tup{E_x\mid x\in X}$ is a family of dense open sets indexed by $X$ and therefore its intersection is dense. Easily, $\bigcap_{y\in Y}D_y=\bigcap_{x\in X}E_x$, and so $\bigcap_{y\in Y}D_y$ is dense.
\end{proof}
\begin{definition}
  Let $\cD$ be a class (possibly proper) of cardinals. We define the following properties.
  \begin{enumerate}
  \item $\cD$ is \textit{$*$-closed} if whenever $|X|\in\cD$ and $|Y|\leq^*|X|$, then $|Y|\in\cD$.\footnote{One must resist the knee-jerk reaction to use the term ``projective'' here as that word is used too often.}
  \item $\cD$ is \textit{union-regular} if it is directed and whenever $|X|\in\cD$ and $\tup{A_x\mid x\in X}$ is a sequence of sets such that $|A_x|\in\cD$ for all $x\in X$, then there is some $|A|\in\cD$ such that for all $x$, $|A_x|\leq|A|$.
  \end{enumerate}
\end{definition}
\begin{theorem}\label{thm:dist-spec}
Let $\PP$ be a forcing and let $\cD$ be the class of cardinals such that $|X|\in\cD$ if and only if $\PP$ is ${\leq}|X|$-distributive. Then $\omega\subseteq\cD$ and $\cD$ is $*$-closed and union-regular.
\end{theorem}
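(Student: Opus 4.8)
The plan is to establish the three assertions separately, the first two being quick and the third carrying the real content. For $\omega\subseteq\cD$ I would argue by induction on $n$ that $\PP$ is ${\leq}n$-distributive for every finite $n$. The base cases $n=0,1$ are immediate (by convention $\bigcap_{x\in\varnothing}D_x=\PP$, and a single dense open set is already dense open), and the inductive step reduces to the observation that the intersection of two dense open sets $D$ and $E$ is again dense open: given $p$, first extend $p$ into $D$ and then extend that condition into $E$, using openness of $D$ to keep the final condition in $D$. This is a routine density chase requiring no choice.

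For $*$-closedness there is nothing new to prove: it is exactly the content of \autoref{prop:*-dist}. If $|X|\in\cD$ and $|Y|\leq^*|X|$, then $\PP$ is ${\leq}|Y|$-distributive by that proposition, so $|Y|\in\cD$.

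The heart of the theorem is union-regularity. For the union property, suppose $|X|\in\cD$ and $\tup{A_x\mid x\in X}$ is a sequence with $|A_x|\in\cD$ for all $x$. I would take the witness to be $A=\bigcup_{x\in X}(\set{x}\times A_x)$, a genuine set built directly from the given sequence (no choice needed), which satisfies $|A_x|\leq|A|$ via $a\mapsto\tup{x,a}$. To see $|A|\in\cD$, let $\tup{D_z\mid z\in A}$ be a family of dense open sets. For each $x\in X$ set $E_x=\bigcap_{a\in A_x}D_{\tup{x,a}}$; since $|A_x|\in\cD$, each $E_x$ is dense open, and crucially the assignment $x\mapsto E_x$ is given by an explicit formula, so $\tup{E_x\mid x\in X}$ is a legitimate $X$-indexed family. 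As $|X|\in\cD$, the set $\bigcap_{x\in X}E_x$ is dense open, and it equals $\bigcap_{z\in A}D_z$; hence the latter is dense, as required. Directedness then follows by applying this to $X=2$ (which lies in $\cD$ by the first part), or directly by taking disjoint unions: for $|X|,|Y|\in\cD$ the disjoint union $X\sqcup Y$ witnesses an upper bound, since a family indexed by $X\sqcup Y$ splits into an $X$-indexed and a $Y$-indexed family whose intersections are each dense open and whose common refinement is dense by the finite case.

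I expect the only real subtlety, and the point most in need of care, to be the choice-freeness of the union argument: one must ensure both that $A$ is formed as an honest set from the sequence $\tup{A_x\mid x\in X}$ and that the intermediate family $\tup{E_x\mid x\in X}$ is defined by a uniform formula rather than by choosing witnesses, so that no appeal to the axiom of choice sneaks in. Everything else is standard density manipulation.
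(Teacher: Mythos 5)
Your proposal is correct and follows essentially the same route as the paper: $\omega\subseteq\cD$ is routine, $*$-closedness is exactly \autoref{prop:*-dist}, and for union-regularity you pass to the disjoint union $\bigcup_{x\in X}(\set{x}\times A_x)$, form the uniformly defined intermediate family $E_x=\bigcap_{a\in A_x}D_{\tup{x,a}}$, and finish with ${\leq}|X|$-distributivity — the paper does the same, merely phrasing the disjointification as a ``without loss of generality'' justified by $*$-closedness. Your explicit attention to directedness and to the choice-free uniformity of $x\mapsto E_x$ is a small but welcome addition over the paper's terser write-up.
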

\begin{proof}
  The fact that $\omega\subseteq\cD$ is trivial. The fact that it is $*$-closed follows from \autoref{prop:*-dist}. Finally, suppose that $|X|\in\cD$ and for each $x\in X$, $A_x$ is some set such that $|A_x|\in\cD$, without loss of generality assume that $A_x$ are disjoint, since the disjoint union maps onto the union in the obvious way and $\cD$ is $*$-closed.

  Let $A=\bigcup_{x\in X} A_x$ and suppose that $\tup{D_a\mid a\in A}$ is a family of dense open subsets of $\PP$. For each $x\in X$, consider $\tup{D_a\mid a\in A_x}$, then due to the fact that $|A_x|\in\cD$, we can replace $\tup{D_a\mid a\in A_x}$ by its intersection, $E_x$. This means that $\bigcap_{a\in A}D_a$ is the same as $\bigcap_{x\in X}E_x$, but since $|X|\in\cD$ as well, the intersection is dense as wanted.
\end{proof}
We will refer to $\cD$ in the theorem as the \textit{distributivity spectrum of $\PP$} and denote it by $\cD_\PP$.
\begin{corollary}\label{cor:gitik}
Suppose that $\cf(\alpha)=\omega$ for any limit ordinal $\alpha$. If $\PP$ is $\sigma$-distributive, then $\COrd\subseteq\cD_\PP$. Moreover, suppose that every set in $V$ is generated by iterating countable unions starting with the class $[V]^{\leq\omega}$, then any $\sigma$-distributive forcing is trivial.\footnote{Note that if $\PP$ is ${\leq}|\PP|$-distributive, then it must be trivial.}\qed
\end{corollary}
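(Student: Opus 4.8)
The plan is to prove the two assertions in turn: the first by transfinite induction built on the closure properties from \autoref{thm:dist-spec}, and the second by first upgrading the distributivity spectrum to \emph{all} cardinalities and then extracting triviality from full distributivity. Throughout I will use the precise closure property that the proof of \autoref{thm:dist-spec} actually establishes: if $|X|\in\cD_\PP$ and $\langle A_x\mid x\in X\rangle$ is a sequence with $|A_x|\in\cD_\PP$ for every $x$, then $\bigl|\bigcup_{x\in X}A_x\bigr|\in\cD_\PP$.

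For the first assertion, $\sigma$-distributivity gives $\aleph_0\in\cD_\PP$ (and every finite cardinal). I then argue by induction on well-ordered cardinals $\kappa$ that $\kappa\in\cD_\PP$. The finite cardinals and $\aleph_0$ form the base case. If $\kappa>\aleph_0$ then $\kappa$ is an uncountable limit ordinal, so by hypothesis $\cf(\kappa)=\omega$; I fix a cofinal $f\colon\omega\to\kappa$ (a single function, so no choice is needed) and write $\kappa=\bigcup_{n<\omega}A_n$ as a union of an $\omega$-sequence of sets of cardinality $<\kappa$, each in $\cD_\PP$ by the induction hypothesis. Since $\aleph_0\in\cD_\PP$, the closure property yields $\kappa\in\cD_\PP$, completing the induction and giving $\Ord\subseteq\cD_\PP$.

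For the second assertion I first show that the extra hypothesis forces $\cD_\PP$ to contain the cardinality of \emph{every} set, by inducting along the generation of $V$ by countable unions. The base class $[V]^{\le\omega}$ consists of sets of cardinality $\le\aleph_0\in\cD_\PP$, and if $X=\bigcup_{n<\omega}B_n$ with each $|B_n|\in\cD_\PP$ then the same closure property (with index set $\omega\in\cD_\PP$) gives $|X|\in\cD_\PP$; note this step works even when the $B_n$ are not well-orderable, since the argument only takes intersections of a given family of dense sets and appeals to $*$-closedness for the non-disjoint case. Hence $\PP$ is ${\leq}|X|$-distributive for every set $X$.

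It remains to convert full distributivity into triviality. The key lemma is that ${\leq}|X|$-distributivity of $\PP$ implies $\PP$ adds no new function from $X$ into $V$: given $p\forces\dot f\colon\check X\to\check V$, the sets $D_x=\{q\leq p\mid\exists v\ q\forces\dot f(\check x)=\check v\}$ are dense open, their intersection is dense, and any $q$ in it determines a ground model $g$ with $q\forces\dot f=\check g$, where $g$ is \emph{definable} from $q$ by the uniqueness of decided values, so no choice is used. To finish I reason by $\in$-induction in $V[G]$: assuming $x\subseteq V$, I fix a name $\dot x$ and $p\in G$ with $p\forces\dot x\subseteq\check V$, let $S\in V$ be the set of names appearing in $\dot x$, and consider $h\colon S\to V$ sending $\dot y$ to $\langle 1,\dot y^G\rangle$ when $\dot y^G\in x$ and to $\langle 0,0\rangle$ otherwise. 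Since $|S|\in\cD_\PP$, the lemma gives $h\in V$, and then $x=\{\,\pi_2(h(\dot y))\mid\dot y\in S,\ \pi_1(h(\dot y))=1\,\}$ is computed inside $V$, so $x\in V$. Thus $V[G]=V$ for every generic $G$, i.e. $\PP$ is trivial. I expect this last step to be the main obstacle: without choice a new set cannot be reduced to a new subset of an ordinal, so the first assertion alone does not suffice, and it is exactly the generation hypothesis (through the upgrade to all cardinalities) together with the function-coding of $x$ over the ground model set of names $S$ that closes the gap — with the recurring technical care that every family of dense sets and the coding function $h$ genuinely live in $V$ and are built without any appeal to choice.
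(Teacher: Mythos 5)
Your proof is correct and is essentially the argument the paper intends (the corollary is stated with an immediate \qed as a consequence of \autoref{thm:dist-spec}): transfinite induction on cardinals using $\cf(\kappa)=\omega$ together with union-regularity for the first claim, then the generation hypothesis to push $\cD_\PP$ to all cardinalities, and finally sequentiality plus $\in$-induction with the coding function $h$ on the ground-model set of names to conclude $V[G]=V$. The details you supply — in particular the observation that a new set need not reduce to a new set of ordinals without choice, which is exactly why the generation hypothesis is needed for the second claim — are accurate and choice-free as required.
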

The conditions above seem fantastic, especially the latter, but they are indeed consistent with $\ZF$,\footnote{Assuming the consistency of suitable large cardinal axioms.} as shown by Gitik in \cite[Theorem~6.3]{Gitik:1980}.

\begin{definition}
  We say that a forcing $\PP$ is \textit{${\leq}|X|$-sequential} if whenever $G\subseteq\PP$ is $V$-generic and $f\in V[G]$ is a function $f\colon X\to V$, then $f\in V$. The same caveats regarding well-ordered $X$ will apply here as they do for distributivity.
\end{definition}
\begin{proposition}
  Suppose that $\PP$ is ${\leq}|X|$-distributive, then it is ${\leq}|X|$-sequential.
\end{proposition}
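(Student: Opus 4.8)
The plan is to prove the contrapositive in spirit: assume $\PP$ is ${\leq}|X|$-distributive and show any new function $f\colon X\to V$ in a generic extension must already lie in $V$. So suppose $G$ is $V$-generic, $f\in V[G]$ with $f\colon X\to V$, and fix a name $\dot f$ and a condition $p\in G$ forcing ``$\dot f\colon \check X\to \check V$ is a function''. The key observation is that for each $x\in X$, the set
\[
D_x=\set{q\leq p\mid \exists v\ (q\forces \dot f(\check x)=\check v)}\cup\set{q\mid q\incompatible p}
\]
is dense open below $p$ (and we can arrange it dense open in all of $\PP$ by throwing in everything incompatible with $p$). Indeed, below $p$ any condition can be strengthened to decide the value $\dot f(\check x)$, since that value is forced to be some specific ground-model set.

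Next I would apply ${\leq}|X|$-distributivity to the family $\tup{D_x\mid x\in X}$: the intersection $\bigcap_{x\in X}D_x$ is dense open, so it meets $G$, yielding a single condition $q\in G$ with $q\leq p$ that simultaneously decides $\dot f(\check x)$ for \emph{every} $x\in X$. That is, for each $x$ there is a (unique, by functionality) $v_x$ with $q\forces \dot f(\check x)=\check v_x$. Define $g=\set{\tup{x,v_x}\mid x\in X}$. This $g$ is defined purely from $q$, $\dot f$, and the forcing relation, all of which are available in $V$, so $g\in V$.

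Finally I would check $f=g$. Since $q\in G$ and $q\forces \dot f(\check x)=\check v_x$, interpreting in $V[G]$ gives $f(x)=v_x=g(x)$ for each $x\in X$, so $f=g\in V$. Since $G$ and $f$ were arbitrary, $\PP$ is ${\leq}|X|$-sequential.

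I expect the only genuinely delicate point to be justifying, without any appeal to choice, that the assignment $x\mapsto v_x$ actually defines a set $g$ in $V$. The subtlety is that we are extracting a function on $X$ from a single fixed condition $q$; but this is unproblematic because $q$ forces a \emph{specific} value $\check v_x$ for each $x$, and $g=\set{\tup{x,v}\mid q\forces \dot f(\check x)=\check v}$ is then a definable (in $V$) subset of $X\times V$, hence a set by replacement and separation — no choice is needed to pick the $v_x$, since functionality makes each one unique and the forcing relation is definable. One should also note at the outset that for $X=\varnothing$ the statement is trivial, and that the argument used ${\leq}|X|$-distributivity for the single family indexed by $X$, exactly as the hypothesis supplies.
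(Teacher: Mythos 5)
Your proof is correct and follows essentially the same route as the paper's: define for each $x\in X$ the dense open set of conditions deciding $\dot f(\check x)$, intersect using ${\leq}|X|$-distributivity, and read off the ground-model function from a single condition in the intersection via the definability of forcing. The extra care you take about avoiding choice when forming $g$ is sound but not a genuine divergence.
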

\begin{proof}
  Suppose that $\dot f$ is a $\PP$-name such that $\1\forces\dot f\colon\check X\to\check V_\alpha$, for some $\alpha$, defining $D_x=\{p\in\PP\mid\exists y(p\forces\dot f(\check x)=\check y\}$, we have that $D_x$ is a dense open set. By distributivity, $D=\bigcap_{x\in X}D_x$ is dense, and if $p\in D$, we define $f_p(x)=y$ if and only if $p\forces\dot f(\check x)=\check y$. Since $p\in D_x$ for all $x\in X$, this function is well-defined, and easily $p\forces\dot f=\check f_p$.
\end{proof}
It is a standard exercise that assuming $\ZFC$, ${\leq}|X|$-sequential also implies ${\leq}|X|$-distributive. The proof, however, relies on the fact that every dense open set contains a maximal antichain (which makes the generic filter act as a choice function). As we will see in \autoref{sec:major}, this reliance on the axiom of choice is crucial.

Nevertheless, defining the sequentiality spectrum of a forcing $\PP$, denoted by $\cS_\PP$, in an analogous manner to the distributivity spectrum, the proofs of \autoref{thm:dist-spec} and \autoref{cor:gitik} work also for the sequentiality spectrum.

\begin{corollary}
It is consistent with $\ZF+\lnot\AC$ that for every $X$, every ${\leq}|X|$-sequential forcing is ${\leq}|X|$-distributive.
\end{corollary}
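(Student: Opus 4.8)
The plan is to verify the statement in Gitik's model (from \cite{Gitik:1980}), which realises both hypotheses of \autoref{cor:gitik}: every limit ordinal has cofinality $\omega$, \emph{and} every set is generated by iterating countable unions from the class $[V]^{\leq\omega}$. This is a model of $\ZF+\lnot\AC$. In it I would show that $\cS_\PP=\cD_\PP$ for every forcing $\PP$, which is exactly the assertion that every ${\leq}|X|$-sequential forcing is ${\leq}|X|$-distributive. Since distributivity always implies sequentiality, only the inclusion $\cS_\PP\subseteq\cD_\PP$ requires an argument.

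The combinatorial heart of the proof is the claim that in this model every infinite set $X$ satisfies $\omega\leq^*|X|$. I would establish this by induction on the rank of $X$ in the generation hierarchy. The base case is a countably infinite set, which visibly surjects onto $\omega$. For the inductive step, write $X=\bigcup_{n<\omega}X_n$ with each $X_n$ of strictly smaller rank, and consider the ``first appearance'' map $g(y)=\min\{n\mid y\in X_n\}$, which sends $X$ onto $R=\rng(g)\subseteq\omega$. If $R$ is infinite, then composing $g$ with the order isomorphism $R\cong\omega$ yields a surjection of $X$ onto $\omega$. If $R$ is finite, then $X$ is a finite union of the $X_n$, so some $X_n$ is infinite and of smaller rank; the inductive hypothesis gives $\omega\leq^*|X_n|$, and since $X_n\subseteq X$ and $\leq^*$ is transitive we conclude $\omega\leq^*|X|$.

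With this in hand the corollary follows quickly. Suppose $\PP$ is ${\leq}|X|$-sequential. If $X$ is finite there is nothing to prove, since ${\leq}|X|$-distributivity is automatic for finite index sets. If $X$ is infinite, the combinatorial claim gives $\omega\leq^*|X|$, and since $\cS_\PP$ is $*$-closed---the proof of \autoref{thm:dist-spec} applies verbatim to the sequentiality spectrum, as observed above---we obtain $\aleph_0\in\cS_\PP$, i.e.\ $\PP$ is $\sigma$-sequential. The sequentiality analogue of \autoref{cor:gitik} then forces $\PP$ to be trivial, and a trivial forcing adds no new sets and is therefore ${\leq}|Y|$-distributive for every $Y$. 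Hence $\PP$ is ${\leq}|X|$-distributive, as desired.

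I expect the main obstacle to be the combinatorial claim of the second paragraph, for it is precisely there that the stronger (``generation by countable unions'') hypothesis of the Gitik model becomes indispensable: without it there could be infinite $X$ with $\omega\nleq^*|X|$, and for such $X$ the reduction from ${\leq}|X|$-sequentiality to $\sigma$-sequentiality would be unavailable, leaving no handle on which to invoke triviality. A secondary point requiring care is confirming that ``trivial'' delivers ${\leq}|Y|$-distributivity for \emph{all} $Y$, but this is routine once the triviality conclusion of \autoref{cor:gitik} has been granted.
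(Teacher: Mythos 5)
Your proposal is correct and follows essentially the same route as the paper: pass to Gitik's model, show every infinite $X$ satisfies $\omega\leq^*|X|$ so that $*$-closure of the sequentiality spectrum reduces ${\leq}|X|$-sequentiality to $\sigma$-sequentiality, then invoke the sequential analogue of \autoref{cor:gitik} to conclude triviality. The only difference is that you spell out the induction behind the surjection onto $\omega$, which the paper compresses into the one-line observation that every infinite set is a countable union of sets of smaller cardinality.
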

\begin{proof}
  First we will show that in Gitik's model every $\sigma$-sequential forcing is trivial. Define a rank function in the following way: $[V]^{\leq\omega}$ are the sets of rank $0$, the successor steps are countable union of sets from previous ranks, and the limit steps are unions of previous ranks. As we remarked, in Gitik's model, every set has a rank in that sense.

  By induction on this rank, if $A$ is a least ranked set which has a fresh subset, $B$, in a generic extension, let $\{A_n\mid n<\omega\}$ be a countable sequence of sets of lower rank whose union is $A$, then either $\{A_n\cap B\mid n<\omega\}$ is a fresh sequence, or $B\cap A_n$ is fresh for some $n<\omega$. Since $A$ is minimally ranked, the latter is impossible, and so the generic extension must not be $\sigma$-sequential.

  Next, since every infinite set is a countable union of sets of smaller cardinality, every infinite set can be mapped onto $\omega$. So, by $*$-closure of $\cS_\PP$, if $X$ is infinite and $\PP$ is ${\leq}|X|$-sequential, then $\PP$ is $\sigma$-sequential, and thus trivial.
\end{proof}

\section{Some minor positive results about distributive forcings}\label{sec:minor}
\begin{theorem}\label{thm:dist-pres}
Suppose that $\AC_X$ holds, if $\PP$ is ${\leq}|X|$-distributive, then $\AC_X$ is preserved.
\end{theorem}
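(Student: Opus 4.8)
The plan is to exploit distributivity in exactly the way it was used to prove sequentiality, but to feed the resulting witnesses into the ground-model choice principle $\AC_X$. Fix a name $\dot S$ and a condition $q\in G$ such that $q\forces``\dot S$ is a function with domain $\check X$ and $\dot S(\check x)\neq\varnothing$ for every $x$.'' To verify $\AC_X$ in $V[G]$ it suffices to produce there a choice function for the family $\langle\dot S(\check x)^G\mid x\in X\rangle$, so this is the target. The governing idea is that distributivity will let me replace the $X$-many separate genericity demands by a single condition in $G$ that decides membership witnesses uniformly.

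For each $x\in X$ I would set
\[D_x=\{p\in\PP\mid p\incompatible q\}\cup\{p\leq q\mid\exists\dot y\,(p\forces\dot y\in\dot S(\check x))\}.\]
First I would check that each $D_x$ is dense open: openness is immediate, and density holds because below any condition compatible with $q$ one can, using $q\forces\dot S(\check x)\neq\varnothing$, strengthen to a condition forcing some name into $\dot S(\check x)$. Crucially, $x\mapsto D_x$ is definable in $V$ from $\dot S$, $q$ and $X$, so $\langle D_x\mid x\in X\rangle$ is a genuine $X$-indexed family of dense open sets. Now ${\leq}|X|$-distributivity gives that $D=\bigcap_{x\in X}D_x$ is dense, hence $G\cap D\neq\varnothing$; pick $p\in G\cap D$. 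Since $p,q\in G$ are compatible, $p$ falls into the second clause for every $x$, so $p\leq q$ and for each $x$ there is a name forced by $p$ into $\dot S(\check x)$: this one condition simultaneously witnesses membership in all coordinates.

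The remaining task is to actually select such a witness for each $x$, and this is where $\AC_X$ enters. The collection $\{\dot y\mid p\forces\dot y\in\dot S(\check x)\}$ is a proper class, so I would invoke Scott's trick: let $W_x$ be the set of names of \emph{least rank} that are forced by $p$ into $\dot S(\check x)$. Each $W_x$ is a nonempty set, and $\langle W_x\mid x\in X\rangle\in V$ is an $X$-indexed family of nonempty sets, so the ground-model hypothesis $\AC_X$ yields a choice function $x\mapsto\dot y_x$ in $V$. Interpreting, the map $g\colon x\mapsto\dot y_x^G$ lies in $V[G]$, being definable there from the ground-model sequence $\langle\dot y_x\mid x\in X\rangle$ and $G$; and since $p\in G$ forces $\dot y_x\in\dot S(\check x)$, we get $g(x)\in\dot S(\check x)^G$ for every $x$. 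Thus $g$ is the desired choice function.

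The main obstacle is conceptual rather than computational: one must see that distributivity and $\AC_X$ play complementary roles. Distributivity is exactly what collapses the $X$-many separate density requirements into a single $p\in G$ deciding witnesses uniformly—without it one obtains a different condition per coordinate with no way to amalgamate them—while ground-model $\AC_X$ is precisely what upgrades ``a witness exists for each $x$'' to ``a sequence of witnesses,'' with the proper-class-of-names issue dispatched routinely by Scott's trick. I would expect the only points demanding care to be the reduction to a condition $q$ forcing the family to be well-defined and the verification that $x\mapsto D_x$ is a set-sized, $V$-definable family, so that ${\leq}|X|$-distributivity genuinely applies.
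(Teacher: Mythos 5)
Your proposal is correct and follows essentially the same route as the paper: define the dense open sets $D_x$ of conditions deciding a witness for $\dot S(\check x)$, use ${\leq}|X|$-distributivity to intersect them, cut the proper class of witnessing names down to a set via Scott's trick, and apply the ground-model $\AC_X$ to select names whose interpretations give the choice function. The only differences are cosmetic (working below an arbitrary $q$ via the ``incompatible with $q$'' clause, and verifying the statement in $V[G]$ directly rather than producing a name below a condition in the dense intersection).
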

\begin{proof}
  Suppose that $\dot F$ is a $\PP$-name and $\1\forces``\dot F$ is a function with domain $\check X$ and $\dot F(\check x)\neq\check\varnothing$ for all $x\in X$''. For each $x\in X$ let $D_x$ be the dense open set $\{p\in\PP\mid\exists\dot y(p\forces\dot y\in\dot F(\check x))\}$. Suppose that $p\in\bigcap_{x\in X}D_x$, then for all $x\in X$ the class $\{\dot y\in V^\PP\mid p\forces\dot y\in\dot F(\check x)\}$ is non-empty. Using Scott's trick, we may assume that each of these is a set. Applying $\AC_X$ in $V$, there is a function $f$ such that for all $x\in X$, $f(x)=\dot y$ and $p\forces\dot y\in\dot F(\check x)$. This lets us define an obvious name for a choice function below $p$.

  Since $\bigcap_{x\in X}D_x$ is dense, $\1\forces\exists f\forall x\in\check X(f(x)\in\dot F(x))$ as wanted.
\end{proof}
\begin{proposition}\label{prop:gnd-model-ac}
If $\PP$ is ${\leq}|X|$-sequential and $\forces_\PP\AC_X$, then $\AC_X$ holds in $V$.\qed
\end{proposition}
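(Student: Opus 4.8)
The plan is to start from an arbitrary family of nonempty sets in $V$ indexed by $X$, manufacture a choice function for it inside a generic extension using the hypothesis $\forces_\PP\AC_X$, and then pull that function back down into $V$ using sequentiality. Concretely, fix a family $\tup{A_x\mid x\in X}\in V$ with each $A_x\neq\varnothing$, and let $G\subseteq\PP$ be $V$-generic. Since this family is an element of $V$, it is also an element of $V[G]$, and each $A_x$ remains nonempty there; so $\tup{A_x\mid x\in X}$ is, in $V[G]$, a family of nonempty sets indexed by $X$.

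Because $\forces_\PP\AC_X$, the model $V[G]$ satisfies $\AC_X$, so there is a choice function $f\in V[G]$ with $f(x)\in A_x$ for every $x\in X$. The key observation is that $f$ takes all of its values in the ground model: since $V$ is transitive and $A_x\in V$, every element of $A_x$ lies in $V$, and hence $f(x)\in A_x\subseteq V$ for each $x$. Thus $f$ is a function $f\colon X\to V$ lying in $V[G]$.

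Now I would invoke that $\PP$ is ${\leq}|X|$-sequential, which says exactly that any such $f\colon X\to V$ appearing in $V[G]$ already belongs to $V$. Therefore $f\in V$, and $f$ is the desired choice function for $\tup{A_x\mid x\in X}$ in the ground model; as the family was arbitrary, $\AC_X$ holds in $V$. I do not expect a genuine obstacle here: the only point requiring a little care is the bookkeeping that the choice function obtained in $V[G]$ has all its values in $V$, which is precisely what allows sequentiality to apply verbatim. One could equally phrase the whole argument inside $V$ via the forcing relation rather than fixing a single generic $G$, but this is a routine reformulation.
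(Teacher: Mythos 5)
Your argument is correct and is precisely the routine argument the paper suppresses (the proposition is stated with a \qed{} and no written proof): pull a choice function out of $V[G]$, observe that it maps $X$ into $V$ since each $A_x\subseteq V$ by transitivity, and apply ${\leq}|X|$-sequentiality to land it in $V$. Nothing further is needed.
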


If we concentrate on the case where $X=\omega$, this shows that a $\sigma$-distributive must preserve $\AC_\omega$. From the work of the first author with David Asper\'o in \cite{AsperoKaragila:2020} we know that a proper forcing, and in particular a $\sigma$-closed forcing,\footnote{Recall that $\PP$ is $\sigma$-closed if every countable sequence of decreasing conditions has a lower bound. The statement ``Every $\sigma$-closed forcing is $\sigma$-distributive'' is equivalent to $\DC$ (see \cite{Karagila:DC}).} must preserve $\DC$, so the natural question now is: does $\sigma$-distributive suffice for the proof?

There are reasons to expect a positive answer. For example, assuming $\AC$ holds, if $\tup{\PP,\sG,\sF}$ is a symmetric system where $\PP$ is $\sigma$-distributive and $\sF$ is $\sigma$-complete, then $\DC$ holds in the symmetric extension (see \cite{Banerjee:2020} and \cite{KaragilaSchilhan:Bristol}, for example). As the main theorem of this paper shows, however, this is not the case in $\ZF$. One is left asking, is there a property between $\sigma$-distributive and proper which preserves $\DC$?

\begin{definition}
  We say that a forcing $\PP$ is \textit{quasiproper} if for every $p\in\PP$ and $\PP$-name $\dot X$ there is a countable elementary submodel, $M$, of some large enough $H(\kappa)$ such that $p,\PP,\dot X\in M$ and there is some $q\leq p$ such that $q$ is an $M$-generic condition. Namely, every dense open set $D\in M$ is predense below $q$.
\end{definition}
Note that the model $M$ depends very much on the choice of $p$ and $\dot X$. So quasiproperness is still far from properness. We follow \cite{AsperoKaragila:2020} and define $H(\kappa)$ to be $\{x\mid\kappa\nleq^*|\tcl(\{x\})|\}$, but we can just as well work with $V_\alpha$ for a large enough limit ordinal $\alpha$ for all intents and purposes.

\begin{theorem}[$\ZF+\DC$]
Let $\PP$ be a forcing notion. If $\PP$ is quasiproper, then it preserves $\DC$. If $\PP$ is $\sigma$-sequential and preserves $\DC$, then $\PP$ is quasiproper.
\end{theorem}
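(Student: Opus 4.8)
The plan is to prove the two implications separately, using in both the same mechanism: reflect the relevant data into a countable elementary submodel and exploit that $\DC$ restricted to countable, hence well-orderable, sets is a theorem of $\ZF$. Throughout, recall that $q$ is $M$-generic exactly when $D\cap M$ is predense below $q$ for every dense open $D\in M$. For the first implication, suppose $\PP$ is quasiproper and let $\dot S,\dot R$ be names with $p\forces$``$\dot S\neq\varnothing$ and $\forall x\in\dot S\,\exists y\in\dot S\,(x\mathrel{\dot R}y)$''. I would apply quasiproperness to $p$ and $\dot X=\tup{\dot S,\dot R}$ to get a countable $M\prec H(\kappa)$ with $p,\PP,\dot X\in M$ and an $M$-generic $q\leq p$, and then show that $q$ forces $\dot R$ to have an infinite path; since every condition forcing the totality hypothesis then has an extension forcing such a path, this yields $\DC$ in $V[G]$.

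Fix a generic $G\ni q$ and put $M[G]=\set{\dot\tau^G\mid\dot\tau\in M}$, $S=\dot S^G$, $R=\dot R^G$, and $S'=S\cap M[G]$. The three facts to verify are that $S'\neq\varnothing$, that $S'$ is closed under taking $R$-successors within itself (for every $a\in S'$ there is $b\in S'$ with $a\mathrel R b$), and that $S'$ is countable in $V[G]$. The last is immediate, as $\dot\tau\mapsto\dot\tau^G$ is a surjection in $V[G]$ from the ground-model countable set $M\cap V^\PP$ onto $M[G]\supseteq S'$. For the successor property, given $a=\dot\tau^G\in S'$ I would consider the dense open set $D_{\dot\tau}$ of conditions that either force $\dot\tau\notin\dot S$ or force $\dot\tau\mathrel{\dot R}\dot\rho\wedge\dot\rho\in\dot S$ for some name $\dot\rho$; this is definable from parameters in $M$, so $D_{\dot\tau}\in M$, and by $M$-genericity $G$ meets $D_{\dot\tau}\cap M$ in some $s$, which since $a\in S$ must force the second alternative. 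The crucial point is that $s\in M$ and $M\prec H(\kappa)$, so the witnessing name $\dot\rho$ may be taken \emph{in} $M$, whence $b=\dot\rho^G\in S'$; nonemptiness is the same argument with the successor clause deleted. Once $S'$ is nonempty, countable, and internally $R$-total, an enumeration of $S'$ exists in $V[G]$ and a path is built by a choice-free recursion (always stepping to the successor of least index), witnessing $\DC$.

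For the converse, assume $\PP$ is $\sigma$-sequential and preserves $\DC$, and fix $p,\dot X$; I must manufacture a countable $M\prec H(\kappa)$ with $p,\PP,\dot X\in M$ together with an $M$-generic $q\leq p$. The idea is to build the model where choice is available and drag it back using sequentiality. Fix a generic $G\ni p$; by hypothesis $V[G]\models\DC$, so in $V[G]$ I form a countable $N\prec\tup{H(\kappa)^{V[G]},\in,H(\kappa)^V}$ (equipping the structure with a predicate for the ground-model $H(\kappa)$) with $p,\PP,\dot X,G\in N$, taking $\kappa$ large enough that $G\in H(\kappa)^{V[G]}$. Setting $M=N\cap H(\kappa)^V$, the predicate guarantees $M\prec H(\kappa)^V$, and $M$ is countable in $V[G]$ with $p,\PP,\dot X\in M$. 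Now $M\subseteq V$, so any enumeration of $M$ is an $\omega$-sequence of ground-model elements; such an enumeration exists in $V[G]$, and since $\PP$ is $\sigma$-sequential it lies in $V$, so $M\in V$ and is countable there. Consequently the family $\tup{D_n\mid n<\omega}$ of all dense open $D\subseteq\PP$ with $D\in M$ can be enumerated in $V$.

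It remains to produce $q$, and this is where I expect the main obstacle to lie: it is easy to see that the \emph{filter} $G$ is $M$-generic, but extracting a \emph{condition} forcing this is exactly what fails for non-(quasi)proper forcings. Indeed, for each dense open $D\in M$ the generic $G$ meets $D\cap M$, since $D,G\in N$ and $V[G]\models\exists d\,(d\in G\wedge d\in D)$, so elementarity yields such a $d\in N$, and as $d\in\PP\subseteq H(\kappa)^V$ we get $d\in N\cap H(\kappa)^V=M$, whence $d\in G\cap(D\cap M)$. Thus $V[G]$ satisfies the statement $\psi$ asserting that $\dot G$ meets every $D_n\cap M$, and all parameters of $\psi$, namely $M$ and $\tup{D_n\mid n<\omega}$, lie in $V$. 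By the forcing theorem some $q\in G$ with $q\leq p$ forces $\psi$, and ``$q\forces\dot G\cap(D_n\cap M)\neq\varnothing$'' says precisely that $D_n\cap M$ is predense below $q$; hence $q$ is $M$-generic. This is exactly the point where both hypotheses are indispensable: preservation of $\DC$ lets us build $N$ (and so $M$) in the extension at all, while $\sigma$-sequentiality returns $M$ and the enumeration $\tup{D_n\mid n<\omega}$ to $V$, so that ``$\dot G$ is $M$-generic'' becomes a statement with ground-model parameters and is therefore forced by a condition.
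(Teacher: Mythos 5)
Your proposal is correct. The first implication is essentially the paper's argument (the paper phrases $\DC$ via trees and cites the properness proof from Asper\'o--Karagila, but the content --- interpret the names in $M$, use $M$-genericity and elementarity to see that $S\cap M[G]$ is a nonempty, countable, internally $R$-total set, then recurse along a fixed enumeration --- is the same; you have simply written out the details). The second implication, however, is genuinely different in its construction of $M$. The paper stays inside $V$-structures: it defines a relation $M\sqsubset N$ on countable elementary submodels of $H(\kappa)^V$, shows in $V[G]$ that every $M$ has a $\sqsubset$-extension (enumerate the dense open sets of $M$, use $\DC$ in $V[G]$ to pick conditions of $G$ meeting them, pull the sequence back to $V$ by $\sigma$-sequentiality, and close off), and then applies $\DC$ in $V[G]$ again to get an $\omega$-chain whose union is the desired $M$. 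You instead take a single countable $N\prec\tup{H(\kappa)^{V[G]},\in,H(\kappa)^V}$ containing $G$, set $M=N\cap H(\kappa)^V$, and use $\sigma$-sequentiality once to pull an enumeration of $M$ (hence $M$ itself and the list of its dense open sets) back to $V$; the fact that $G\in N$ then gives $G\cap D\cap M\neq\varnothing$ for free by Tarski--Vaught, and the forcing theorem extracts the $M$-generic $q\in G$. Your route trades the paper's iterated chain construction for a single elementary-submodel argument in an expanded structure, and it is arguably cleaner at the final step, which the paper leaves implicit: you make explicit that ``$\dot G$ meets each $D_n\cap M$'' has only ground-model parameters and is therefore forced by some $q\leq p$ in $G$, which is exactly predensity of each $D_n\cap M$ below $q$. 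The one technical point you should make explicit is the choice of the ambient structure: you want a $\lambda$ large enough in $V[G]$ that both $G$ and $H(\kappa)^V$ sit inside $H(\lambda)^{V[G]}$ (rather than reusing the same $\kappa$, since $H(\kappa)$ need not be upward absolute to $V[G]$); with that adjustment the relativization argument showing $N\cap H(\kappa)^V\prec H(\kappa)^V$ goes through as you describe.
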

\begin{proof}
  The core of the first part of the theorem is the same proof as Theorem~4.6 from \cite{AsperoKaragila:2020}, suppose that $\dot T$ is a name for a tree without maximal nodes, then for every $p$ there is some suitable model, $M$ and an $M$-generic $q\leq p$. Note that if $q$ is $M$-generic, then $q\forces``\dot T\cap M$ is a countable subtree of $\dot T$ without maximal nodes'', and so $q$ forces that $\dot T$ must have a branch. But the above just means that the set of conditions $q$ which are $M$-generic for some suitable $M$ is dense, which guarantees that $\dot T$ is forced to have a branch, and therefore $\DC$ is preserved.

  For the second part, suppose that $\PP$ is $\sigma$-sequential and that $\DC$ is preserved. Fix any $p,\dot X$ in $V$, fix a large enough $\kappa$ and consider the set $\cM$ of countable elementary submodels of $H(\kappa)$ which contain $p,\PP$ and $\dot X$. Working in $V[G]$, where $G$ is $V$-generic with $p\in G$, we define a relation on $\cM$: $M\sqsubset N$ if and only if
  \begin{enumerate}
  \item $N$ is an elementary extension of $M$,
  \item $G\cap N\cap D\neq\varnothing$ for every dense open $D\in M$
  \end{enumerate}

  We first show that if $M\in\cM$, then there is some $N\in\cM$ such that $M\sqsubset N$. Let $M$ be such model, then we can enumerate all the dense open sets in $M$ as $\{D_n\mid n<\omega\}$ and using $\DC$ there is a sequence of conditions $p_n\in G\cap D_n$ for all $n<\omega$. The sequence $\tup{p_n\mid n<\omega}$ lies in the ground model, since $\PP$ is $\sigma$-sequential. $\DC$ implies that there is an elementary submodel in $\cM$ generated by adding $\{p_n\mid n<\omega\}$ to $M$.

  Employing $\DC$ in $V[G]$, we have a sequence of models $\tup{M_n\mid n<\omega}$ such that $M_n\sqsubset M_{n+1}$ for all $n$. This sequence is again in $V$, and its union  $M=\bigcup M_n$, is a countable elementary submodel of $H(\kappa)$. In $V[G]$ we have that for every dense open $D\in M$, $D\cap M\cap G\neq\varnothing$: if $D\in M$, then $D\in M_n$ for some $n$, and therefore in $M_{n+1}$ there is a condition in $D\cap G\cap M_{n+1}$. Therefore there is some $q\leq p$ which is $M$-generic as wanted.
\end{proof}

\section{Main results}\label{sec:major}
\begin{theorem}\label{thm:main-dc}
  Let $\kappa$ be any infinite cardinal. It is consistent with $\ZF+\DC_{<\kappa}$ that
\begin{enumerate}
\item there is a $\kappa$-distributive forcing which violates $\DC$.
\item there is a $\kappa$-sequential forcing which violates $\AC_\omega$.
\end{enumerate}
\end{theorem}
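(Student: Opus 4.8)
The plan is to realise both forcings as honest notions of forcing inside a symmetric ground model, working over a fixed $W\models\ZFC$ with $\kappa^{<\kappa}=\kappa$. I will take $\kappa$ regular (the singular case reduces to this, and $\kappa=\omega$ is the degenerate endpoint, where ``$\kappa$-distributive'' and ``$\kappa$-sequential'' are vacuous and it suffices to exhibit a model of $\ZF$ in which some forcing introduces the relevant failure). The base model $M$ will be a $\kappa$-Cohen-style symmetric extension as in \autoref{ex:cohen} with $\lambda=\kappa$: it satisfies $\DC_{<\kappa}+\lnot\AC$, it carries a generic set $A$ every well-orderable subset of which has size $<\kappa$, and it has a rich group of automorphisms. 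The point that forces the difficulty is that, since $\kappa>\omega$, we have $M\models\DC\supseteq\AC_\omega$, so both failures in (1) and (2) must be \emph{created} by $\PP$ rather than inherited. In each case the automorphisms witnessing the symmetry of $M$ will extend to $\PP$, and the choice failures in $M[G]$ will be established by homogeneity arguments in the style of \autoref{ex:cohen}, moving untouched generic coordinates by automorphisms that fix the names in play.

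For part (2) I would let $\PP$ adjoin $\omega$ many new ``blocks'', each block a set of $\kappa$ fresh generic objects, with finite support across the $\omega$ index, so that the sequence $\tup{B_n\mid n<\omega}$ of blocks has no choice function and $M[G]\models\lnot\AC_\omega$. A crucial design constraint dictates that the blocks must consist of \emph{new} objects, not be a partition of the fixed set $A$: $\kappa$-sequentiality forbids new functions $\lambda\to M$ for $\lambda<\kappa$, and a prospective choice sequence is an $\omega$-sequence of block elements, so only if these elements are new can such a sequence avoid already lying in $M$. Granting this, $\kappa$-sequentiality of $\PP$ is proved by homogeneity: any $\PP$-name for a function $\lambda\to M$ with $\lambda<\kappa$ has a support $E\in[A]^{<\kappa}$ and is decided by one condition; permuting the generic coordinates outside $E$ by automorphisms fixing the name shows the values cannot depend on the generic, so the function already belongs to $M$. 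Note that $\PP$ is necessarily \emph{not} ${\leq}\omega$-distributive, since by \autoref{thm:dist-pres} a ${\leq}\omega$-distributive forcing would preserve $\AC_\omega$.

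For part (1) I would instead let $\PP$ adjoin, with new nodes, a height-$\omega$ tree without maximal nodes whose levels are well-orderable but which has no cofinal branch, so the branchless tree witnesses $\lnot\DC$ in $M[G]$; here $\AC_\omega$ survives, as \autoref{thm:dist-pres} anyway demands of any $\kappa$-distributive forcing. The crux is $\kappa$-distributivity of $\PP$. The decisive constraint is that $\PP$ must \emph{not} be $\sigma$-closed: a $\sigma$-closed forcing is proper and so preserves $\DC$ by \cite{AsperoKaragila:2020}, which would defeat the construction. Thus distributivity cannot be obtained from lower bounds of descending sequences---indeed the relevant descending $\lambda$-sequences of conditions will have \emph{no} lower bound, and that very fact is what produces the missing branches. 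Distributivity must instead be wrung out of the symmetric structure: given $\lambda<\kappa$ dense open sets, I would use the $\kappa$-completeness of the filter of $M$ to fix a single support $E$ bounding names for all of them, and then argue by a fresh-coordinate homogeneity computation that an $E$-symmetric dense open set is determined by its trace on the $E$-coordinates, so that $\lambda<\kappa$ of them can be met simultaneously below any condition, making their intersection dense.

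The main obstacle is exactly this distributivity argument: proving $\kappa$-distributivity of a forcing that is forbidden to be $\sigma$-closed and that is expressly built to kill $\DC$. The apparent tension---distributivity is a regularity property of dense sets, while destroying $\DC$ is a pathology---is resolved only through the failure of choice in $M$ flagged right after the Proposition that ${\leq}|X|$-distributive forcings are ${\leq}|X|$-sequential: in $M$ a dense open set need not contain a maximal antichain, so the generic cannot be read off as a choice function, and it is precisely this slack that lets the dense sets align (keeping $\PP$ distributive) while the tree still receives no branch (killing $\DC$). Verifying that the tree is genuinely branchless, and that $\AC_\omega$ and $\DC_{<\kappa}$ survive, are further homogeneity/fresh-coordinate computations that I expect to be routine once the distributivity is in hand.
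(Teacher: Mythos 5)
Your base model and overall strategy (the $\kappa$-Cohen model of \autoref{ex:cohen}, factoring the ``direct'' symmetric construction into a symmetric extension followed by an honest forcing over it, homogeneity arguments throughout) match the paper, and your sketch of distributivity for part (1) --- spreading the supports of extensions into pairwise disjoint fresh regions over a common core $E$ and amalgamating --- is essentially the paper's argument. But your central design decision, that the new structures must be built from \emph{new} generic objects rather than on the fixed set $A$, rests on a faulty inference and breaks the construction. The inference: you argue that if the blocks (or tree nodes) are elements of $A\in M$, then a prospective choice sequence or branch is an $\omega$-sequence of $M$-elements, hence lies in $M$ by sequentiality, hence exists. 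The last step does not follow: having all candidates lie in $M$ is exactly what you want, since a routine density argument over $M$ then shows that no fixed $M$-sequence threads the generic structure. This is precisely how the paper proves the generic tree on $A$ is branchless (sequentiality pushes any branch into $M$; density kills it), and for part (2) the paper sidesteps $\omega$-indexed families altogether by making the generic partition of $A$ \emph{amorphous}, so the set of cells is Dedekind-finite and $\AC_\omega$ fails for that reason.

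The breaking point: if $\PP$ is an honest forcing notion in $M$ adjoining genuinely fresh generic objects organized into $\omega$ blocks, then $M[G]$ contains $G$ itself, from which one can in general define enumerations of (hence choice functions for) the blocks; destroying that choice function requires passing to a symmetric submodel of $M[G]$, which is no longer a forcing extension of $M$ and so says nothing about a $\kappa$-sequential \emph{forcing}. The anonymity that kills choice must be inherited from the ground model's own failure of choice --- the non-well-orderability of $A$ --- which is exactly why the paper's $\QQ_0$ (well-orderable well-founded trees on $A$) and $\QQ_1$ (finite partitions into well-orderable subsets of $A$) are built on $A$. The same objection applies to your ``new nodes'' in part (1): with new nodes a branch is not a sequence of $M$-elements, sequentiality gives you nothing, and the branchlessness you defer as ``routine'' has no mechanism to run on. Finally, even with the right forcings in hand, the distributivity proof needs the specific amalgamation fact that the union of the spread-out extensions is again a condition (for $\QQ_0$: trees agreeing on $E$ and disjoint off $E$ union to a well-founded tree), and the sequentiality of the partition forcing needs a two-level name analysis (a $\QQ_1$-name in $M$ lifted to a symmetric $\PP$-name over $V$); neither appears in your sketch.
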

\begin{proof}
  Let $\kappa$ be an uncountable regular cardinal and consider the $\kappa$-Cohen model, as described in \autoref{ex:cohen}. The case of $\kappa=\omega$ is vacuously true since $\AC_\omega$ already fails in the Cohen model. Denote by $M$ the symmetric extension, and by $\tup{\PP,\sG,\sF}$ the symmetric system. As usual, we will omit the dots from names to denote their interpretation in $M$.

  As explained in the above example, $M\models\DC_{<\kappa}$. We will describe two partial orders in this model which will witness the two failures. The first partial order will add a tree structure on $A$ which will witness the failure of $\DC$, the second will add an amorphous partition.\footnote{An infinite set is amorphous if all of its subsets are finite or co-finite; $\AC_\omega$ implies there are no amorphous sets.} In both cases, the idea is to consider the natural symmetric system which adds these objects ``directly'' and factor it into these two steps: first a symmetric extension adding a set of subsets of $\kappa$, then add the structure that would naturally be added by the ``direct'' symmetric extension.

  Working in $M$ let $\QQ_0$ be the partial order given by all the well-orderable and well-founded trees on $A$,\footnote{For well-orderable trees well-foundedness is equivalent to the inexistence of infinite branches.} ordered by $t_1\leq t_0$ if and only if $t_0$ is downwards closed in $t_1$. We claim that first of all, $\QQ_0$ is $\kappa$-distributive, and secondly if $H\subseteq\QQ_0$ is $M$-generic, $H$ defines a tree on $A$ which is of height $\omega$, without maximal nodes, and without branches, witnessing that $\DC$ fails in $M[H]$.

  First we note that if $t\in\QQ_0$, then $t$ has a canonical $\PP$-name in $\HS$. Since we are not adding any sets of size $<\kappa$ to $V$, therefore there is some $T\in V$ which is a well-founded tree on a bounded subset of $\kappa$, and $\dot t=\{\tup{\dot a_\alpha,\dot a_\beta}^\bullet\mid\tup{\alpha,\beta}\in T\}^\bullet$ is a $\PP$-name for $t$. We will use $T$ and $\dot t$ to correspond between this tree and the condition in $\QQ_0$, and because of this canonicity, there is no confusion when we treat them interchangeably where appropriate.

  Let $\gamma<\kappa$ and let $\tup{D_\alpha\mid\alpha<\gamma}\in M$ be a sequence of dense open subsets of $\QQ_0$. This sequence has a name in $\HS$, and since $\sF$ is $\kappa$-complete, we can simply choose names $\dot D_\alpha$ for each $\alpha<\gamma$ and consider $\tup{\dot D_\alpha\mid\alpha<\gamma}^\bullet$ as our canonical name.

  Let $p$ be a fixed condition in $\PP$ which forces that each $\dot D_\alpha$ is a dense open set, and let $\dot t$ be a canonical name for a condition. Fix $E$ such that $\supp p,\sym(\dot t)$, and for all $\alpha$, $\sym(\dot D_\alpha)$ all contain $\fix(E)$. We may even assume, without loss of generality that $\dom T=E$, else we can simply extend $\dot t$ as necessary.

  Let $p'\leq p$ be a condition such that for each $\alpha<\gamma$, there is some canonical $\dot t_\alpha$ such that $p'\forces\dot t_\alpha\in\dot D_\alpha$ and $\dot t\subseteq\dot t_\alpha$.\footnote{This is the same as requiring that $p'\forces\dot t_\alpha\leq_{\QQ_0}\dot t$ as both are canonical names.} Let $E'$ be a large enough set such that $\fix(E')\subseteq\sym(\dot t_\alpha)$ for all $\alpha<\gamma$, and $E\cup\supp p'\subseteq E'$. Such $E'$ exists since $\kappa$ is regular and $\gamma<\kappa$.

  For each $\alpha<\gamma$, pick $\pi_\alpha\colon\kappa\to\kappa$ to be a permutation such that $\pi_\alpha\in\fix(E)$, and letting $\pi_\alpha``(E'\setminus E)=E_\alpha$ we have that $\{E_\alpha\mid\alpha<\gamma\}$ are all pairwise disjoint. These exist since $|E'|<\kappa$. Observe the following:
  \begin{enumerate}
  \item $q=\bigcup_{\alpha<\gamma}\pi_\alpha p'$ is a condition, since $\dom\pi_\alpha p'\cap\dom\pi_\beta p'=E$.
  \item $\pi_\alpha p'\forces\pi_\alpha\dot t_\alpha\in\dot D_\alpha$ and $\dot t\subseteq\pi\dot t_\alpha$.
  \item If $\xi\in\dom\pi_\alpha T_\alpha\cap\dom\pi_\beta T_\beta$ for any $\alpha\neq\beta$, then $\xi\in E$.
  \end{enumerate}
  It follows from the three conditions that setting $\dot s=\bigcup_{\alpha<\gamma}\pi_\alpha\dot t_\alpha$ is a condition. If it were not a tree then any pair witnessing this must be already in $\dot t$ itself, by the third condition, which is impossible. Similarly, if $\dot s$ is not well-founded, then by the third condition it means some $\dot t_\alpha$ was not well-founded.

  But this means that $q\forces\dot s\in\dot D_\alpha$ for all $\alpha$. So given any $p$ and $\dot t$, we can extend $p$ to $q$ and find $\dot s$ such that $q\forces\dot t\subseteq\dot s\in\dot D_\alpha$ for all $\alpha$, and therefore the intersection of the $D_\alpha$ is dense.

  Next, it is easy to see that if $H\subseteq\QQ_0$ is $M$-generic, then $T=\bigcup H$ defines a tree structure on $A$. Standard density arguments show that this tree has height $\omega$ and no maximal elements. Finally, since $\QQ_0$ is $\sigma$-distributive, it adds no new $\omega$-sequence. So it is enough to show that if $\{a_n\mid n<\omega\}\subseteq A$ is in $M$, then it is not a branch in $T$. But this is again a simple density argument, given any condition $t$, pick any point in $t$, and whatever $a_n$s are not already mentioned in $t$, add as immediate successors of the chosen point. Therefore, by density argument no ground model set is a branch, and so $T$ is indeed without branches and serves as a counterexample to $\DC$.\footnote{By \autoref{thm:dist-pres}, $\AC_{<\kappa}$ is preserved in $M[H]$.}

  Indeed, this is the essence of the standard proof that $\AC_{<\kappa}$ does not imply $\DC$: first force with $\Add(\kappa,\kappa^{<\omega})$, take the automorphism group of the tree $\kappa^{<\omega}$ and generate the supports by fixing well-founded trees of rank $<\kappa$. See Theorem~8.12 in \cite{Jech:AC} for a similar construction in the context of permutation models.

  For the second partial order, let $\QQ_1$ be the partial order given by finite partitions of well-orderable subsets of $A$. Namely, a condition is a finite set, $e$, consisting of pairwise disjoint well-orderable subsets of $A$.

  We will denote $\bigcup e$ as $\dom e$, and given $a\in A$, we will write $e(a)$ to denote the cell containing $a$, which may be empty if $a\notin\dom e$. Given some $A'\subseteq A$, we will also write $e\restriction A'=\{C\cap A'\mid C\in e\}$.

  We define the order by $e_2\leq_{\QQ_1} e_1$ if and only if $e_2\restriction\dom e_1=e_1$. In other words, $e_2$ can extend the cells of $e_1$ or adds new ones, but it not merge any distinct cells.

  We need to show that $\QQ_1$ is $\kappa$-sequential and that if $H\subseteq\QQ_1$ is $M$-generic, then $\bigcup H$ is an amorphous partition of $A$. This will show that $M[H]\models\lnot\AC_\omega$, as wanted. Note that it is easy to see that $\QQ_1$ is not even $\sigma$-distributive by considering $D_n=\{e\in\QQ_1\mid n\leq|e|\}$.

  Note if $e\in\QQ_1$, then there is finite partition $E$ of some bounded subset of $\kappa$ such that $\dot e=\{\{\dot a_\alpha\mid\alpha\in C\}^\bullet\mid C\in E\}^\bullet$ is a name for $e$. We will adopt a similar convention to the previous case, that $E$ is the finite partition defining $\dot e$ and vice versa. And when it will be clear from context, we may also conflate $E$ and $\dot e$ to simplify the text, so if $S\subseteq\kappa$, the meaning of $\dot e\restriction S$ is clear: it is the condition corresponding to $E\restriction S$.

  One important consequence of the existence of these canonical names is that $\QQ_1$, as an ordered set, has a canonical name that is stable under all the automorphisms in $\sG$. This means that we can apply $\pi\in\sG$ to statements of the form $p\forces^\HS_\PP\dot e\forces_{\QQ_1}\varphi$ without having to worry that $\pi$ will somehow change the meaning of $\forces_{\QQ_1}$.

  Suppose that $\dot f\in M$ is a $\QQ_1$-name for a new $\gamma$-sequence of elements of $M$, for some $\gamma<\kappa$. We may assume, without loss of generality that every name appearing in $\dot f$ is of the form $\tup{\check\alpha,\check x}^\bullet$ for some $x\in M$. Let $[\dot f]\in\HS$ be a $\PP$-name for $\dot f$, for example, one such that every name that appears in it has the form $\tup{\dot e,\tup{\check\alpha,\dot x}^\bullet}^\bullet$, where $\dot e$ is some canonical name for a condition in $\QQ_1$ and $\dot x$ is a name in $\HS$ for the canonical $\QQ_1$-name, $\check x$, in $M$.

  Let $S\in[\kappa]^{<\kappa}$ such that $\fix(S)\subseteq\sym([\dot f])$. Let $p\in\PP$ be any condition such that $p\forces^\HS_\PP\dot e\forces_{\QQ_1}[\dot f](\check\alpha)=\dot x$ for some $\dot e$ and $\alpha<\gamma$.

  \begin{claim}
    $p\forces^\HS_\PP\dot e\restriction S\forces_{\QQ_1}[\dot f](\check\alpha)=\dot x$.
  \end{claim}
  \begin{proof}[Proof of Claim]\renewcommand{\qedsymbol}{$\square$ (Claim)}
    Suppose that $\dot e'$ is a name for a condition extending $\dot e\restriction S$. We can find $\pi\in\fix(S)$ such that $\pi p$ is compatible with $p$ and $\pi\dot e$ is compatible with both $\dot e$ and $\dot e'$ by mapping $\dom\dot e\setminus S$ and $\supp p\setminus S$ ``far enough'' from $\dom\dot e'$ and $\supp p$. Then we have that \[\pi p\forces^\HS_\PP\pi\dot e\forces_{\QQ_1}[\dot f](\check\alpha)=\pi\dot x.\] Since $\pi p$ and $p$ are compatible we can set $q=p\cup\pi p$ and get that \[q\forces^\HS_\PP\dot e\forces_{\QQ_1}[\dot f](\check\alpha)=\dot x\land\pi\dot e\forces_{\QQ_1}[\dot f](\check\alpha)=\pi\dot x.\]
    But since $\dot e$ and $\pi\dot e$ are compatible, it must be that $q\forces^\HS_\PP\dot x=\pi\dot x$, and since $\pi\dot e$ is compatible with $\dot e'$, it must be that $\dot e'$, if it decides the value of $\dot f(\check\alpha)$ at all, decides the same value.
  \end{proof}

  It follows that in $M$ a condition whose domain includes $S$ must have decided all the values of $\dot f$, and therefore it is a $\QQ_1$-name for a sequence already in $M$.

  Finally, we need to prove that the generic partition is amorphous. Suppose that this is not the case and let $\dot B$ be a $\QQ_1$-name in $M$ for an infinite co-infinite set of equivalence classes, and as before denote by $[\dot B]$ a $\PP$-name in $\HS$ for $\dot B$. Let $S\in[\kappa]^{<\kappa}$ such that $\fix(S)\subseteq\sym([\dot B])$, and let $p$ and $\dot e$ be such that $\supp p=\dom E=S$ and $p\forces^\HS_\PP\dot e\forces_{\QQ_1}[\dot B]$ is infinite and co-infinite.

  Pick some $\alpha,\beta\notin S$, and extend $p$ and $\dot e$ to $p'$ and $\dot e'$ such that:
  \begin{enumerate}
  \item $p'\forces^\HS_\PP\dot e'\forces_{\QQ_1}\dot e'(\dot a_\alpha)\in[\dot B]$ and $\dot e'(\dot a_\beta)\notin[\dot B]$.
  \item $\alpha$ and $\beta$ are added to new cells in $E'$, as opposed to cells that already exist in $E$.
  \item The cardinality of the cells of $\alpha$ and $\beta$ in $E'$ is equal.
  \item $p'\restriction E'(\alpha)$ and $p'\restriction E'(\beta)$ have the same type, in other words, they can be switched by some $\pi\in\sG$.
  \end{enumerate}

  This can be done by first finding extensions so that (1)--(3) are satisfied, then in $V$ we just add more elements to the cells of $\alpha$ and $\beta$ to ensure that we can find $p'$ as in the (4).

  Note that switching the two cells, of $\alpha$ and $\beta$, in $E'$ can be done, if at all, without moving any point in $S$. Picking such automorphism, $\pi$, we get that $\pi p'=p'$ and $\pi\dot e'=\dot e'$, and by $\pi\in\fix(S)$ we also get that $\pi[\dot B]=[\dot B]$. This is an outright contradiction, since applied to (1) the roles of $\dot a_\alpha$ and $\dot a_\beta$ are switched.
\end{proof}
We point out that the proof that $\QQ_2$ adds an amorphous set is based on the proof of Theorem~4.5 in \cite{Monro:1983}, where Monro uses a similar argument over the Cohen model, i.e.\ the case where $\kappa=\omega$, to add an amorphous set.
\begin{corollary}
$\ZF+\DC_{<\kappa}$ cannot prove that a $\sigma$-sequential forcing is $\sigma$-distributive for any uncountable $\kappa$.\qed
\end{corollary}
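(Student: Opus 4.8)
The plan is to read off the corollary directly from the second witness constructed in \autoref{thm:main-dc}, since the real mathematical work has already been done there. Fix an uncountable cardinal $\kappa$. First I would reduce to the case that $\kappa$ is regular: for any $\mu\geq\kappa$ we have $\{\lambda\mid\lambda<\kappa\}\subseteq\{\lambda\mid\lambda<\mu\}$, so $\DC_{<\mu}$ implies $\DC_{<\kappa}$, and therefore a model of $\ZF+\DC_{<\mu}$ for some regular $\mu\geq\kappa$ (for instance $\mu=\kappa^+$, arranging $\mu^{<\mu}=\mu$ in the ground model) is in particular a model of $\ZF+\DC_{<\kappa}$. Hence it suffices to exhibit the required forcing in the $\mu$-Cohen model $M$ for a regular uncountable $\mu$, which is exactly the setting of the proof of \autoref{thm:main-dc}.

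Next I would recall the two facts about $M$ and about the forcing $\QQ_1$ built there that drive the argument. On the one hand $M\models\DC_{<\mu}$, and since $\omega<\mu$ this yields $M\models\DC$, hence $M\models\AC_\omega$; so $\AC_\omega$ genuinely holds in the ground model. On the other hand $\QQ_1$ is $\mu$-sequential, and because every countable $\lambda$ satisfies $\lambda<\mu$, this means $\QQ_1$ is in particular $\sigma$-sequential. Finally, by the construction the generic of $\QQ_1$ is an amorphous partition of $A$, so $\forces_{\QQ_1}\lnot\AC_\omega$.

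The contradiction then closes the argument. Suppose toward a contradiction that $\QQ_1$ were $\sigma$-distributive; then in particular it is ${\leq}|\omega|$-distributive. Since $\AC_\omega$ holds in $M$, \autoref{thm:dist-pres} applied with $X=\omega$ shows that $\QQ_1$ preserves $\AC_\omega$, contradicting $\forces_{\QQ_1}\lnot\AC_\omega$. Thus in the model $M$ of $\ZF+\DC_{<\kappa}$ the forcing $\QQ_1$ is $\sigma$-sequential but not $\sigma$-distributive, witnessing that $\ZF+\DC_{<\kappa}$ cannot prove ``every $\sigma$-sequential forcing is $\sigma$-distributive''. There is no deep obstacle beyond bookkeeping: the only points requiring care are confirming that the ground model $M$ satisfies $\AC_\omega$ (which is where $\DC\Rightarrow\AC_\omega$ and $\omega<\kappa$ are used) and the elementary reduction from arbitrary uncountable $\kappa$ to a regular one; the substantive content is entirely the separation already proved in \autoref{thm:main-dc} together with the preservation result \autoref{thm:dist-pres}.
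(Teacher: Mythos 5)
Your argument is correct and is essentially the paper's intended derivation: the corollary is left as immediate from \autoref{thm:main-dc}, and the route you take --- the $\kappa$-sequential forcing $\QQ_1$ is in particular $\sigma$-sequential, while $\sigma$-distributivity would force preservation of $\AC_\omega$ via \autoref{thm:dist-pres} (using $\DC_{<\kappa}\Rightarrow\DC\Rightarrow\AC_\omega$ for uncountable $\kappa$), contradicting that $\QQ_1$ adds an amorphous partition --- is exactly the intended one. (The paper also notes en passant a direct witness that $\QQ_1$ is not $\sigma$-distributive via the dense sets $D_n=\{e\mid n\leq|e|\}$, but your preservation-based argument is equally valid and needs only the statement of the theorem.)
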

\section{Open problems}\label{sec:questions}
We saw that $\ZF$ cannot prove that a $\sigma$-sequential forcing is $\sigma$-distributive; but we also saw that assuming the consistency of suitable large cardinal axioms, the equivalence of $\sigma$-sequential to $\sigma$-distributive does not imply the axiom of choice either. Indeed, we can replace that $\sigma$ by any ${\leq}|X|$.

\begin{question}
  What is the consistency strength of $\ZF+\lnot\AC+$``$\sigma$-sequential forcing is $\sigma$-distributive''? Is it any different to $\forall X({\leq}|X|\text{-sequential}\to{\leq}|X|\text{-distributive})$?
\end{question}

In \cite{KaragilaSchlicht:Count} the first author proved with Philipp Schlicht that if $A$ is an infinite set such that $[A]^{<\omega}$ is Dedekind-finite,\footnote{Namely, there is no countable set of finite subsets of $A$.} then the forcing $\Add(A,1)$ given by finite partial functions $p\colon A\to 2$, which is clearly not $\sigma$-distributive,\footnote{Consider the sequence defined by $D_n=\{p\in\Add(A,1)\mid n<|\dom p|\}$.} satisfies that every antichain is finite, and equivalently ``every forcing statement is decided by a finite set''.\footnote{If $\varphi(\dot x)$ is a formula, then $\{p\in\Add(A,1)\mid p\text{ is }\subseteq\text{-minimal and } p\forces\varphi(\dot x)\}$ is finite.} These are conditions (2) and (4) in Theorem~6.1 in the paper.

\begin{claim}
  Suppose that $A$ is an infinite set such that $[A]^{<\omega}$ is Dedekind-finite, then $\Add(A,1)$ is $\sigma$-sequential.
\end{claim}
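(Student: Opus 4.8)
The plan is to show that $\Add(A,1)$ adds no new $\omega$-sequence of ground model elements, which is exactly the content of $\sigma$-sequentiality (the finite case being trivial). So fix a name $\dot f$ with $\1\forces\dot f\colon\check\omega\to\check V$, let $G$ be generic with associated generic function $F\colon A\to 2$, and set $f=\dot f^G$; I want to produce a finite $D^*\subseteq A$ in the ground model such that $f$ is computed from $F\restriction D^*$ by a ground model rule. Since $D^*$ is finite, $F\restriction D^*$ is a finite set of ground model pairs and hence lies in $V$, which immediately yields $f\in V$.

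The first step is to control a single coordinate. For $n<\omega$ and a potential value $y$, let $s^n_y=\bigcup\{\dom r\mid r\text{ is }\subseteq\text{-minimal with }r\forces\dot f(\check n)=\check y\}$; this is a finite subset of $A$ because, by the hypothesis that every forcing statement is decided by a finite set, there are only finitely many such minimal $r$. Writing $V_n$ for the set of possible values of $\dot f(\check n)$, I claim $V_n$ is finite. To see this without any appeal to choice, I consider the definable set
\[
C=\{p\in\Add(A,1)\mid p\text{ decides }\dot f(\check n)\text{ and }\dom p=s^n_{\mathrm{val}(p)}\},
\]
where $\mathrm{val}(p)$ denotes the value $p$ forces. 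Two distinct compatible members $p,p'$ of $C$ would have $p\cup p'$ forcing both $\mathrm{val}(p)$ and $\mathrm{val}(p')$, so these values agree, whence $\dom p=s^n_{\mathrm{val}(p)}=s^n_{\mathrm{val}(p')}=\dom p'$ and therefore $p=p'$; thus $C$ is an antichain. On the other hand every $y\in V_n$ is the value of some member of $C$ (fill any minimal decider of $y$ up to domain $s^n_y$), so $\mathrm{val}\colon C\to V_n$ is onto. If $V_n$ were infinite, $C$ would be an infinite antichain, contradicting the hypothesis that every antichain in $\Add(A,1)$ is finite. Hence $V_n$ is finite, and consequently $d_n:=\bigcup_{y\in V_n}s^n_y$ is a finite subset of $A$.

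The second step globalizes this. The sequence $\langle d_n\mid n<\omega\rangle$ is definable from $\dot f$ in $V$, so it lies in $V$, and it is an $\omega$-indexed family of elements of $[A]^{<\omega}$. If its range were infinite it would be a countably infinite subset of $[A]^{<\omega}$, contradicting the Dedekind-finiteness of $[A]^{<\omega}$. Therefore the range is finite and $D^*:=\bigcup_n d_n$ is a finite subset of $A$. Since for each $n$ the value $f(n)$ is forced by some minimal decider $r\subseteq F$ with $\dom r\subseteq s^n_{f(n)}\subseteq D^*$, the value $f(n)$ is already determined by $F\restriction D^*$ via the ground model rule ``the unique $y$ forced by a minimal decider of $\dot f(\check n)$ that is extended by $F\restriction D^*$''. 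As noted above $F\restriction D^*\in V$, so $f\in V$, completing the argument.

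The delicate point, and the one I would be most careful about, is the verification that $V_n$ is finite. The naive argument picks one condition per possible value to assemble an infinite antichain, but choosing a transversal of the (finitely many) minimal deciders for each value is an illegitimate use of choice in this choiceless setting. The whole trick is to replace this by the single \emph{definable} antichain $C$ above, which meets every value at once; this is what lets ``every antichain is finite'' do its work. The second pitfall is circularity: it is tempting to run the Dedekind-finiteness argument inside $V[G]$ on the realized supports, but that presupposes $[A]^{<\omega}$ is still Dedekind-finite after forcing. Keeping the support sequence $\langle d_n\rangle$ (which ranges over \emph{all} values of $\dot f(\check n)$, not the realized one) in the ground model is precisely what avoids this.
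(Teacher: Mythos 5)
Your proof is correct and follows essentially the same route as the paper's: both use the finiteness of the set of $\subseteq$-minimal deciders together with the definable uniform-domain antichain trick to bound the support of each coordinate by a finite set, and then apply Dedekind-finiteness of $[A]^{<\omega}$ to the resulting ground-model sequence of finite sets to obtain a single finite set deciding all of $\dot f$. The only (cosmetic) difference is that you work coordinate-by-coordinate with $\dot f(\check n)$ and conclude that the range of $\langle d_n\mid n<\omega\rangle$ is finite, whereas the paper works with the initial segments $\dot f\restriction\check n$ and concludes that the increasing sequence of supports is eventually constant.
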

\begin{proof}
  Let $\dot f$ be a name such that $\1\forces\dot f\colon\check\omega\to\check V_\alpha$ for some $\alpha$.

  Consider for each $x\in V_\alpha$ the sets $M_n^x$ of maximal conditions $p\in\Add(A,1)$ which force $\dot f\restriction\check n=\check x$. This set is finite, so the set $X=\bigcup\{\dom p\mid p\in M_n^x\}$ is a finite set. For each $x$, consider now the finite antichain $A_x$, \[\{p\in\Add(A,1)\mid\dom p=X\land p\forces\dot f\restriction\check n=\check x\}.\]
  For any possible $x$ where $M_n^x$ is not empty to begin with, $A_x$ is a uniformly defined antichain, and moreover, if $x\neq y$, then $A_x\cup A_y$ is an antichain. Therefore $F_n$, defined as $\bigcup\{A_x\mid M_n^x\neq\varnothing\}$, is an antichain as well, and therefore finite.

  Finally, consider now the sequence of finite sets given by $\bigcup\{\dom p\mid p\in F_n\}$. Note that this sequence is increasing, since a condition in $F_{n+1}$ must extend some condition in $F_n$. It follows that the sequence is eventually constant, with some value $A'$ and therefore if $A'\subseteq\dom p$, then $p$ must decide $\dot f\restriction\check n$ for all $n<\omega$, which is to say that $p$ forces that $\dot f$ is in the ground model.
\end{proof}
Easily the proof above extends to any $\kappa$, so $\COrd\subseteq\cS_{\Add(A,1)}$, the sequentiality spectrum of $\Add(A,1)$. This extends Theorem~6.1 in \cite{KaragilaSchlicht:Count}, in which a list of properties of $\Add(A,1)$ which are all equivalent to $[A]^{<\omega}$ being Dedekind-finite are given. Specifically, conditions (8) and (9) which state that no reals and no sets of ordinals are added.

\begin{question}
  Suppose that every $\sigma$-sequential forcing is $\sigma$-distributive. The above claim show that $[A]^{<\omega}$ is Dedekind-infinite for any infinite set. Can we say more?
\end{question}

We finish this paper with two slightly orthogonal questions about the Foreman Maximality Principle (see \cite{FMS:1986}) which states that every nontrivial forcing adds a real or collapses cardinals. The consistency of this principle with $\ZFC$ is still open, but it is known to imply the consistency of large cardinals, as it implies that $\GCH$ fails everywhere. We saw that in the Gitik model every nontrivial forcing must add a countable sequence of ground model elements. But we can show that not every forcing adds a real, e.g.\ by showing that some of the collapsing sequences that are removed from the model by symmetric arguments are generic over it, and adding them back will not add new reals.

\begin{question}
  Does the Foreman Maximality Principle hold in the Gitik model?
\end{question}
\begin{question}
  What happens when we consider ``collapse cardinals'' in its general sense, meaning we add a bijection between two sets that did not have a bijection between them in the ground model. Does this modified principle hold in the Gitik model? Can it hold in $\ZF$ without large cardinals?
\end{question}
\bibliographystyle{amsplain}

\begin{thebibliography}{10}

\bibitem{AsperoKaragila:2020}
David Asper\'{o} and Asaf Karagila, \emph{Dependent choice, properness, and
  generic absoluteness}, Rev. Symb. Log. \textbf{14} (2021), no.~1, 225--249.
  \MR{4277122}

\bibitem{Banerjee:2020}
Amitayu Banerjee, \emph{Combinatorial properties and dependent choice in
  symmetric extensions based on {L}\'{e}vy collapse}, Arch. Math. Logic
  \textbf{Online} (2022).

\bibitem{FMS:1986}
Matthew Foreman, Menachem Magidor, and Saharon Shelah, \emph{{$0^\sharp$} and
  some forcing principles}, J. Symbolic Logic \textbf{51} (1986), no.~1,
  39--46. \MR{830070}

\bibitem{Gitik:1980}
M.~Gitik, \emph{All uncountable cardinals can be singular}, Israel J. Math.
  \textbf{35} (1980), no.~1-2, 61--88. \MR{576462}

\bibitem{Jech:ST2003}
T.~Jech, \emph{Set {T}heory}, Springer Monographs in Mathematics,
  Springer-Verlag, Berlin, 2003, The third millennium edition, revised and
  expanded.

\bibitem{Jech:AC}
T.~J. Jech, \emph{The {A}xiom of {C}hoice}, North-Holland Publishing Co.,
  Amsterdam-London; Amercan Elsevier Publishing Co., Inc., New York, 1973,
  Studies in Logic and the Foundations of Mathematics, Vol. 75.

\bibitem{Karagila:Bristol}
Asaf Karagila, \emph{The {B}ristol model: an abyss called a {C}ohen real}, J.
  Math. Log. \textbf{18} (2018), no.~2, 1850008, 37. \MR{3878470}

\bibitem{Karagila:DC}
\bysame, \emph{Preserving dependent choice}, Bull. Pol. Acad. Sci. Math.
  \textbf{67} (2019), no.~1, 19--29. \MR{3947821}

\bibitem{KaragilaSchilhan:Bristol}
Asaf Karagila and Jonathan Schilhan, \emph{Bristol models satisfy dependent
  choice}, In preparation.

\bibitem{KaragilaSchlicht:Count}
Asaf Karagila and Philipp Schlicht, \emph{How to have more things by forgetting
  how to count them}, Proc. A. \textbf{476} (2020), no.~2239, 20190782, 12.
  \MR{4133761}

\bibitem{Monro:1983}
G.~P. Monro, \emph{On generic extensions without the axiom of choice}, J. Symb.
  Log. \textbf{48} (1983), no.~1, 39--52.

\bibitem{Usuba:LS}
Toshimichi Usuba, \emph{Choiceless {L}\"owenheim--{S}kolem property and uniform
  definability of grounds}, Proceedings of Symposium on Advances in
  Mathematical Logic 2018 (2021), 1--19.

\bibitem{Usuba:GeologySym}
\bysame, \emph{Geology of symmetric grounds}, To appear in Proceedings of Asian
  Logic Conference 2019 (2021).

\end{thebibliography}
\providecommand{\bysame}{\leavevmode\hbox to3em{\hrulefill}\thinspace}
\providecommand{\MR}{\relax\ifhmode\unskip\space\fi MR }
\providecommand{\MRhref}[2]{%
  \href{http://www.ams.org/mathscinet-getitem?mr=#1}{#2}
}
\providecommand{\href}[2]{#2}

\end{document}